\tikzset{
     block/.style={rectangle, draw, fill=green!10, text width=10em,
                   text centered, rounded corners, minimum height=3em},
     arrow/.style={-{Stealth[]}}
     }
\newmdtheoremenv[%
backgroundcolor=green!10,%
outerlinecolor=black,%
leftmargin=0,%
rightmargin=0,
innertopmargin =3pt,%
innerleftmargin = 5pt,
innerrightmargin = 5pt,
splittopskip = \topskip,%
skipabove = \baselineskip,%
skipbelow = \baselineskip,%
roundcorner=5, ntheorem]
{theorem}{Theorem}[section]
\newtheorem{assumption}{Assumption}
\newtheorem{lemma}{Lemma}[section]
\newtheorem{remark}{Remark}[section]
\newenvironment{proof}{{\noindent\it Proof.}\quad}{\hfill $\square$\\}
\begin{document}
% Stable image reconstruction via the springback-penalized model
\title{Is hyperinterpolation efficient in the approximation of singular and oscillatory functions?}

\author{Congpei An\footnotemark[1]
       \quad\quad Hao-Ning Wu\footnotemark[2]}

\renewcommand{\thefootnote}{\fnsymbol{footnote}}
\footnotetext[1]{School of Mathematics, Southwestern University of Finance and Economics, Chengdu, China (ancp@swufe.edu.cn).}
\footnotetext[2]{Department of Mathematics, The University of Hong Kong, Hong Kong, China (hnwu@connect.hku.hk).}

% \date{}
%\shorttitle{SHORT TITLE}
%\shortauthor{F.~FIRSTA AND S.~SECONDA}
% \usepackage{keywords}

\maketitle

\begin{abstract}
Singular and oscillatory functions feature in numerous applications. The high-accuracy approximation of such functions shall greatly help us develop high-order methods for solving applied mathematics problems. This paper demonstrates that hyperinterpolation, a discrete projection method with coefficients obtained by evaluating the $L^2$ orthogonal projection coefficients using some numerical integration methods, may be inefficient for approximating singular and oscillatory functions. A relatively large amount of numerical integration points are necessary for satisfactory accuracy. Moreover, in the spirit of product-integration, we propose an efficient modification of hyperinterpolation for such approximation. The proposed approximation scheme, called efficient hyperinterpolation, achieves satisfactory accuracy with fewer numerical integration points than the original scheme. The implementation of the new approximation scheme is relatively easy. Theorems are also given to explain the outperformance of efficient hyperinterpolation over the original scheme in such approximation, with the functions assumed to belong to $L^1(\Omega)$, $L^2(\Omega)$, and $\mathcal{C}(\Omega)$ spaces, respectively. These theorems, as well as numerical experiments on the interval and the sphere, show that efficient hyperinterpolation has better accuracy in such approximation than the original one when the amount of numerical integration points is limited.
\end{abstract}

\textbf{Keywords: }{hyperinterpolation, numerical integration, singular, oscillatory, modified moments}

\textbf{AMS subject classifications.} 65D32, 41A10, 41A55

\section{Introduction}

Let $\Omega$ be a bounded region of $\mathbb{R}^s$, either the closure of a connected open domain or a smooth closed lower-dimensional manifold in $\mathbb{R}^s$. The region is assumed to have finite measure with respect to a given measure $\text{d}\omega$, that is,
\begin{equation*}
\int_{\Omega}\text{d}\omega=V<\infty.
\end{equation*}

We are interested in the efficient numerical approximation of functions in the form of
\begin{equation}\label{equ:F}
F(x)=K(x)f(x)
\end{equation}
by some polynomials on $\Omega$, where $K\in L^1(\Omega)$ is a real- or complex-valued absolutely integrable function, which needs not be continuous or of one sign, and $f\in\mathcal{C}(\Omega)$ is a continuous (and preferably smooth) function. By \emph{efficient}, we mean that a considerably small amount of sampling points is enough for such approximation with satisfactory accuracy. We also investigate scenarios of $K\in L^2(\Omega)$ and $\mathcal{C}(\Omega)$ to refine the general (but rough) analysis for the case of $K\in L^1(\Omega)$.

\subsection{Sources of functions on the form $F=Kf$}
Functions in the form of \eqref{equ:F} frequently feature in mathematical physics and applied mathematics \cite{MR3971246,MR1922922,MR856705,MR1283940}. For example, the fundamental solutions of the Helmholtz equation are given by

\begin{equation*}
G(x,y)=\begin{dcases}
\frac{i}{4} H_0^{(1)}(\kappa|x-y|)&\text{for }x,y\in\mathbb{R}^2\\
 \frac{1}{4\pi}\frac{e^{i\kappa|x-y|}}{|x-y|}&\text{for }x,y\in\mathbb{R}^3,
\end{dcases}
\end{equation*}
where $|x-y|$ denotes the usual Euclidean distance between $x$ and $y$, $ H_0^{(1)}(z)$ is the Hankel function of the first kind and of order zero, and $\kappa$ is known as the wave number when the equation is applied to waves. The fundamental solution of the biharmonic differential equation in $\mathbb{R}^2$ is given by
\begin{equation*}
G(x,y)=\frac{1}{8\pi}|x-y|^2\log{|x-y|}\quad\text{for }x,y\in\mathbb{R}^2.
\end{equation*}

Another important source of singular and oscillatory functions can be found in the study of
\begin{equation*}
\frac{Y_{\ell, k}(y)}{|x-y|}\quad  \text{for }x,y \in \mathbb{S}^2
\end{equation*}
for the electromagnetic field and wave computation \cite{MR2934227,chien,MR1922922}, where $\mathbb{S}^2:=\{(x,y,z)\in\mathbb{R}^3:x^2+y^2+z^2=1\}$, and $Y_{\ell,k}$ is the spherical harmonic of degree $\ell$ and order $k$.

As we can see, many fundamental solutions are functions with singularity and oscillatory behaviors. The approximation of such functions helps us develop approximation methods to solve related mathematical physics problems. Thus, designing an efficient method for such approximation is a fascinating area of computational mathematics.

\subsection{The approximation basics}
Let the space $L^p(\Omega)$ be equipped with the usual $L^p$ norm for $1\leq p\leq \infty$, that is, for $g\in L^p(\Omega)$,
\begin{equation*}
\|g\|_p:=\begin{cases}
\left(\int_{\Omega}|g|^p\text{d}\omega\right)^{1/p},& 1\leq p<\infty,\\
\sup_{x\in\Omega} |g(x)|, & p = \infty.
\end{cases}
\end{equation*}
The space $\mathcal{C}(\Omega)$ of continuous functions is also equipped with the $L^{\infty}$ norm. In particular, $L^p(\Omega)$ is a Hilbert space when $p=2$, with the $L^2$ inner product defined as $\langle v,z\rangle = \int_{\Omega}v\bar{z}\text{d}\omega$. This inner product also induces the $L^2$ norm, that is, $\|g\|_2=\sqrt{\langle g,g\rangle}$ for $g\in L^2(\Omega)$.

Let $\mathbb{P}_n$ be the linear space of polynomials on $\Omega$ of degree at most $n$, equipped with the $L^2$ inner product, and let $\{p_1,p_2\ldots,p_{d_n}\}\subset\mathbb{P}_n$ be an orthonormal basis of $\mathbb{P}_n$ in the sense of $\langle p_{\ell},p_{\ell'}\rangle = \delta_{\ell\ell'}$ for $1\leq \ell,\ell'\leq d_n$, where $d_n=\dim\mathbb{P}_n$. A typical constructive approximation scheme of degree $n$ for $F$ consists of two stages: evaluating the integrals
\begin{equation}\label{equ:projcoeff}
\int_{\Omega} Kfp_{\ell}\text{d}\omega,\quad \ell=1,2,\ldots,d_n,
\end{equation}
and then approximating $F$ by
\begin{equation}\label{equ:proj}
\mathcal{P}_nF:=\sum_{\ell=1}^{d_n}\left(\int_{\Omega} Kfp_{\ell}\text{d}\omega\right)p_{\ell}.
\end{equation}
This scheme \eqref{equ:proj} is the famous $L^2$ \emph{orthogonal projection} of $F$ onto $\mathbb{P}_n$. To link the orthogonal projection to applications immediately, a discrete approximation of the scheme \eqref{equ:proj}, now known as the \emph{hyperinterpolation}, was introduced by Sloan in 1995 \cite{sloan1995polynomial}. Let $\langle\cdot,\cdot\rangle_m$ be an $m$-point quadrature rule of the form
\begin{equation}\label{equ:quad}
\sum_{j=1}^mw_jg(x_j)\approx \int_{\Omega}g\text{d}\omega,
\end{equation}
where the quadrature points $x_j\in\Omega$ and weights $w_j>0$ for $j=1,2,\ldots,m$. With the assumption that the quadrature rule \eqref{equ:quad} has exactness degree $2n$, i.e.,
\begin{equation*}
\sum_{j=1}^mw_jg(x_j) = \int_{\Omega}g\text{d}\omega\quad \forall g\in\mathbb{P}_{2n},
\end{equation*}
the hyperinterpolant of degree $n$, constructed for the approximation of $F\in\mathcal{C}(\Omega)$, is defined as
\begin{equation}\label{equ:hyperinterpolation}
\mathcal{L}_nF:=\sum_{\ell=1}^{d_n}\langle Kf,p_{\ell}\rangle_mp_{\ell}.
\end{equation}
We refer the reader to \cite{MR1922922,MR2274179,le2001uniform,MR1761902,MR1619076,zbMATH01421286,MR1744380,MR1845243} for some follow-up works on the general analysis of hyperinterpolation and \cite{an2021lasso,MR4226998,montufar2021distributed,sloan2012filtered} for some variants of classical hyperinterpolation.

However, it is well known that if $K$ is singular and highly oscillatory, it is inefficient to evaluate the integrals \eqref{equ:projcoeff} directly using some classical numerical integration rules. Instead, one shall evaluate them in a semi-analytical way: for the evaluation of an integral of the form $\int_{\Omega} K(x)f(x)\text{d}\omega(x)$,
one shall replace $f$ by its polynomial interpolant or approximant of degree $n$, expressed as $\sum_{\ell=1}^{d_n}c_{\ell}p_{\ell}$, and evaluate the integral by
\begin{equation*}
\int_{\Omega} K(x)f(x)\text{d}\omega(x)\approx \sum_{\ell=1}^{d_n}c_{\ell}\int_{\Omega} K(x)p_{\ell}(x)\text{d}\omega(x).
\end{equation*}
This idea for numerical integration may be referred to as the \emph{product-integration rule} in the classical literature \cite{MR494863,MR577405,MR650061}. This rule was initially designed on $[-1,1]$ for $K\in L^1[-1,1]$ and $f\in\mathcal{C}[-1,1]$, and it converges to the exact integral as the number of quadrature points approaches the infinity if $K\in L^p[-1,1]$ for some $p>1$ is additionally assumed. In the context of highly oscillatory integrals with an oscillatory $K\in \mathcal{C}(\Omega)$, this approach is known as the \emph{Filon-type method} \cite{zbMATH02573604,MR2211043}. In most of these references, $f$ is approximated by its interpolant, and it is generally assumed that the \emph{modified moments}
\begin{equation}\label{equ:modifiedmoments}
\int_{\Omega} K(x)p_{\ell}(x)\text{d}\omega(x),\quad \ell = 1,2,\ldots,d_n
\end{equation}
can be computed accurately by using special functions or efficiently by invoking some stable iterative procedures. Besides, $f$ may also be replaced by its approximant. For example, the idea of replacing $f$ with its hyperinterpolant has emerged in the first paper \cite{sloan1995polynomial} on hyperinterpolation. It may be better to replace $f$ with its hyperinterpolant rather than the interpolant. The $L^2$ operator norm of hyperinterpolation is bounded if the regional area/volume $V$ is finite \cite{sloan1995polynomial}, but there is no guarantee of the boundedness of the $L^2$ operator norm of polynomial interpolation over general regions; see a piece of evidence from \cite{MR1619076}.

In this spirit, we propose \emph{efficient hyperinterpolation}, a general scheme for approximating functions in the form of \eqref{equ:F}, provided that the modified moment \eqref{equ:modifiedmoments} can be readily obtained. We approximate $f$ by its hyperinterpolant and the resulting scheme is defined as
\begin{equation}\label{equ:efficienthyper}
\mathcal{S}_nF:=\sum_{\ell=1}^{d_n}\left(\int_{\Omega}K(\mathcal{L}_nf)p_{\ell}\text{d}\omega\right)p_{\ell}.
\end{equation}
Along with the classical hyperinterpolation \eqref{equ:hyperinterpolation}, this scheme can be regarded as another discrete approximation of the $L^2$ orthogonal projection \eqref{equ:proj}. The main theoretical results of this paper are the stability and error analysis for this scheme, and this scheme is verified to be efficient when the amount of quadrature points is considerably small.

Although singular and oscillatory integration was well studied in the classical literature, we found these studies were not widely linked to hyperinterpolation. Here is a possible explanation for this gap. The required quadrature exactness degree $2n$ for a hyperinterpolant of degree $n$ \emph{de facto} ensures a sufficient amount of numerical integration points when $n$ is relatively large. Thus, directly evaluating the integrals \eqref{equ:projcoeff} by the classical numerical integration methods may also lead to relatively satisfactory accuracy.

In a recent work \cite{an2022quadrature}, we discussed what if the required exactness $2n$ is relaxed to $n+n'$, where $0<n'\leq n$. This discussion provides a regime where efficient hyperinterpolation may perform much more accurately than classical hyperinterpolation. In particular, if $K$ is continuous, we show that for the classical hyperinterpolation of degree $n$, the approximation error is bounded as
\begin{equation*}
\|\mathcal{L}_nF-F\|_2\lesssim E_{n'}(Kf),
\end{equation*}
where $E_{n'}(g)=\inf_{\chi\in\mathbb{P}_{n'}}\|g-\chi\|_{\infty}$ for $g\in\mathcal{C}(\Omega)$; while for efficient hyperinterpolation of degree $n$, there holds
\begin{equation*}
\|\mathcal{S}_nF-F\|_2\lesssim E_{n'}(f)+E_n(K\chi^*),
\end{equation*}
where $\chi^*\in\mathbb{P}_{n'}$ is the best uniform approximation of $f$ in $\mathbb{P}_{n'}$, that is, $\|f-\chi^*\|_{\infty}=E_{n'}(f)$. Thus, the controlling term $E_{n'}(Kf)$ is considerably greater than $E_{n'}(f)$ and $E_n(K\chi^*)$ when $n'<n$, $f$ is smooth enough, and $K$ is awkward enough to be approximated by lower degree polynomials, asserting the outperformance of efficient hyperinterpolation in this scenario.

The rest of this paper is organized as follows. In the next section, we review some results of the classical hyperinterpolation and discuss some properties of the efficient modification. The implementation of efficient hyperinterpolation is elaborated in Section \ref{sec:implementation}. In Section \ref{sec:general}, we analyze the stability and the error bound for efficient hyperinterpolation when $K \in L^1(\Omega)$. This analysis is refined in Section \ref{sec:refined} with the assumptions that $K \in L^2(\Omega)$ and $K \in \mathcal{C}(\Omega)$. In particular, we discuss in Section \ref{sec:wrong} why the classical hyperinterpolation may be inefficient when approximating functions in the form of \eqref{equ:F}. In Section \ref{sec:examples}, we investigate efficient hyperinterpolation on the interval and the sphere, respectively, and give some numerical results.

\section{Hyperinterpolation and efficient hyperinterpolation}\label{sec:hyperandefficienthyper}
Hyperinterpolation \eqref{equ:hyperinterpolation} uses classical numerical integration methods to evaluate the $L^2$ orthogonal projection coefficients \eqref{equ:projcoeff}. However, the classical methods prove to be inefficient in the presence of a singularity or an oscillatory $K$. Thus, we propose efficient hyperinterpolation \eqref{equ:efficienthyper} to achieve satisfactory approximation accuracy by using a considerably small amount of quadrature points. In this section, we review some results of \eqref{equ:hyperinterpolation} and discuss some properties of \eqref{equ:efficienthyper}.

\subsection{Hyperinterpolation}
As introduced in the previous section, the original definition \eqref{equ:hyperinterpolation} of hyperinterpolants of degree $n$ requires an $m$-point quadrature rule \eqref{equ:quad} with polynomial exactness $2n$ \cite{sloan1995polynomial}, and this requirement on quadrature exactness has been relaxed to $n+n'$ with $0<n'\leq n$ recently in \cite{an2022quadrature}.

The definition \eqref{equ:hyperinterpolation} is also restricted to the approximation of continuous functions. Thus, if $K$ is additionally assumed to be continuous, then it was derived in \cite{sloan1995polynomial} that $\mathcal{L}_nF$ defined by \eqref{equ:hyperinterpolation} with quadrature exactness $2n$ shall satisfy
\begin{equation}\label{equ:originalstability}
\|\mathcal{L}_nF\|_2\leq V^{1/2}\|F\|_{\infty}
\end{equation}
and
\begin{equation}\label{equ:originalerror}
\|\mathcal{L}_nF-F\|_2\leq 2V^{1/2}E_n(F),
\end{equation}
where $E_n(g)=\inf_{\chi\in\mathbb{P}_n}\|g-\chi\|_{\infty}$ denotes the best uniform approximation error of $g\in\mathcal{C}(\Omega)$ by a polynomial in $\mathbb{P}_n$.

Let the quadrature rule \eqref{equ:quad} have exactness degree $n+n'$ with $0<n'\leq n$, and let it satisfy the \emph{Marcinkiewicz--Zygmund} property that there exists an $\eta\in[0,1)$ such that
\begin{equation}\label{equ:MZproperty}
\left|\sum_{j=1}^mw_j\chi(x_j)^2-\int_{\Omega}\chi^2\text{d}\omega\right|\leq \eta \int_{\Omega}\chi^2\text{d}\omega\quad \forall \chi\in\mathbb{P}_{n},
\end{equation}
and $\eta=0$ if $n'=n$. The property \eqref{equ:MZproperty} is referred to as the Marcinkiewicz--Zygmund property as it can be regarded as the Marcinkiewicz--Zygmund inequality \cite{filbir2011marcinkiewicz,Marcinkiewicz1937,mhaskar2001spherical,MR1863015} applied to polynomials of degree at most $2n$; see \cite{an2022quadrature} for more details. If the quadrature rule \eqref{equ:quad} with exactness degree $n+n'$ satisfies the Marcinkiewicz--Zygmund property \eqref{equ:MZproperty} with $\eta\in[0,1)$, then it was derived in \cite{an2022quadrature} that
\begin{equation}\label{equ:recentstability}
\|\mathcal{L}_nF\|_2\leq \frac{V^{1/2}}{\sqrt{1-\eta}}\|F\|_{\infty}
\end{equation}
and
\begin{equation}\label{equ:recenterror}
\|\mathcal{L}_nF-F\|_2\leq \left(\frac{1}{\sqrt{1-\eta}}+1\right)V^{1/2}E_{n'}(F).
\end{equation}

For the sake of generality, we have the following assumption for the rest of this paper.
\begin{assumption}\label{assumption}
The quadrature rule \eqref{equ:quad} has exactness degree $n+n'$ with $0<n'\leq n$, and it satisfies the Marcinkiewicz--Zygmund property \eqref{equ:MZproperty} with $\eta\in[0,1)$.
\end{assumption}

\subsection{Properties of efficient hyperinterpolation}

We then make a short discussion on the relations among $L^2$ orthogonal projection $\mathcal{P}_n$, hyperinterpolation $\mathcal{L}_n$, and efficient hyperinterpolation $\mathcal{S}_n$. Note that $\mathcal{P}_n\chi=\chi$ for all $\chi\in\mathbb{P}_n$, while for $\mathcal{L}_n$ with quadrature exactness $n+n'$ ($0<n'\leq n$), there only holds $\mathcal{L}_n\chi=\chi$ for all $\chi\in\mathbb{P}_{n'}$, see \cite{an2022quadrature}.

We have the following lemma on the relation between $\mathcal{S}_n$ and $\mathcal{P}_n$.
\begin{lemma}\label{lem:21}
Let $K\in L^1(\Omega)$. Then $\mathcal{S}_n(K\chi) = \mathcal{P}_n(K\chi)$ for all $\chi\in\mathbb{P}_{n'}$.
\end{lemma}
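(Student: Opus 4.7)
The plan is to unpack the two operators' definitions on the input $K\chi$ and then exploit the polynomial-invariance property of hyperinterpolation that was just recalled, namely $\mathcal{L}_n\chi = \chi$ for every $\chi \in \mathbb{P}_{n'}$ (valid under Assumption \ref{assumption} with quadrature exactness $n+n'$).

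First I would treat $F = K\chi$ in the definitions, so that in \eqref{equ:efficienthyper} the inner ``$f$'' is precisely the polynomial $\chi\in\mathbb{P}_{n'}$. Substituting gives
\begin{equation*}
\mathcal{S}_n(K\chi) = \sum_{\ell=1}^{d_n}\left(\int_{\Omega} K(\mathcal{L}_n\chi)\,p_{\ell}\,\text{d}\omega\right)p_{\ell},
\end{equation*}
which is where the invariance $\mathcal{L}_n\chi = \chi$ enters. Once I replace $\mathcal{L}_n\chi$ by $\chi$, the bracketed integral becomes exactly the $\ell$-th $L^2$ orthogonal projection coefficient of $K\chi$, i.e.\ \eqref{equ:projcoeff} with $f$ replaced by $\chi$, so the resulting sum is $\mathcal{P}_n(K\chi)$ as defined in \eqref{equ:proj}.

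The only technical point worth flagging is that the integrals $\int_{\Omega} K\chi p_{\ell}\,\text{d}\omega$ are well-defined under the hypothesis $K\in L^1(\Omega)$: since $\chi p_{\ell}$ is a polynomial and hence bounded on the (finite-measure) region $\Omega$, H\"older's inequality gives $K\chi p_{\ell}\in L^1(\Omega)$, so both sides of the identity make sense. There is no genuine obstacle here; the lemma is essentially a direct consequence of the reproducing property of $\mathcal{L}_n$ on $\mathbb{P}_{n'}$, and the proof will fit comfortably in two or three lines.
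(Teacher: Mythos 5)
Your proposal is correct and follows exactly the paper's own argument: substitute $F=K\chi$ into the definition \eqref{equ:efficienthyper}, invoke $\mathcal{L}_n\chi=\chi$ for $\chi\in\mathbb{P}_{n'}$, and observe that the coefficients $\int_{\Omega}K(\mathcal{L}_n\chi)p_{\ell}\,\text{d}\omega$ then coincide with the projection coefficients $\int_{\Omega}K\chi p_{\ell}\,\text{d}\omega$ of $\mathcal{P}_n(K\chi)$. Your extra remark on the integrability of $K\chi p_{\ell}$ under $K\in L^1(\Omega)$ is a harmless (and reasonable) addition that the paper leaves implicit.
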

\begin{proof}
This is shown immediately from the fact that $\mathcal{L}_n\chi=\chi$ for all $\chi\in\mathbb{P}_{n'}$. The coefficients $\int_{\Omega}K(\mathcal{L}_n\chi)p_{\ell}\text{d}\omega$ of $\mathcal{S}_n(K\chi)$ are exactly the coefficients $\int_{\Omega}K\chi p_{\ell}\text{d}\omega$ of $\mathcal{P}_n(K\chi)$.
\end{proof}

We then discuss the relation between $\mathcal{S}_n$ and $\mathcal{L}_n$. We can see that if $K=1$, then  $\mathcal{S}_nF=\mathcal{L}_nF$. Indeed, the coefficients of $\mathcal{S}_nF$ in this case are
\begin{equation*}
\int_{\Omega}(\mathcal{L}_nf)p_{\ell}\text{d}\omega=\int_{\Omega}\left(\sum_{\ell'=1}^{d_n}\langle f,p_{\ell'}\rangle_mp_{\ell'}\right)p_{\ell}\text{d}\omega=\langle f,p_{\ell}\rangle_m,\quad \ell=1,2,\ldots,d_n,
\end{equation*}
by the orthonormality of $\{p_{\ell}\}$ under the $L^2$ inner product. If $K\neq 1$, we have the following lemma.
\begin{lemma}\label{lem:22}
Let $K\in L^2(\Omega)$. Then $\langle K\mathcal{L}_nf-\mathcal{S}_nF,\chi\rangle=0$ for all $\chi\in\mathbb{P}_n$.
\end{lemma}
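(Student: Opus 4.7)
The plan is to recognize $\mathcal{S}_nF$ as nothing but the $L^2$ orthogonal projection of $K\mathcal{L}_nf$ onto $\mathbb{P}_n$, after which the desired orthogonality is just the defining property of an orthogonal projection. First I would note that $\mathcal{L}_nf\in\mathbb{P}_n$ is a polynomial on the bounded region $\Omega$, hence bounded there, so $K\mathcal{L}_nf\in L^2(\Omega)$ whenever $K\in L^2(\Omega)$. In particular the inner products $\langle K\mathcal{L}_nf,p_\ell\rangle$ are well defined, and the integrals appearing in \eqref{equ:efficienthyper} are exactly these inner products.

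Next I would read off from the definition \eqref{equ:efficienthyper} that
\begin{equation*}
\mathcal{S}_nF=\sum_{\ell=1}^{d_n}\langle K\mathcal{L}_nf,p_\ell\rangle p_\ell=\mathcal{P}_n(K\mathcal{L}_nf),
\end{equation*}
i.e.\ $\mathcal{S}_nF$ is precisely the $L^2$ orthogonal projection of $K\mathcal{L}_nf$ onto $\mathbb{P}_n$ relative to the basis $\{p_\ell\}_{\ell=1}^{d_n}$. From here I would fix an arbitrary $\chi\in\mathbb{P}_n$, expand it in the orthonormal basis as $\chi=\sum_{\ell=1}^{d_n}c_\ell p_\ell$ with $c_\ell=\langle \chi,p_\ell\rangle$, and compute
\begin{equation*}
\langle K\mathcal{L}_nf-\mathcal{S}_nF,\chi\rangle=\sum_{\ell=1}^{d_n}\bar{c}_\ell\bigl(\langle K\mathcal{L}_nf,p_\ell\rangle-\langle K\mathcal{L}_nf,p_\ell\rangle\bigr)=0,
\end{equation*}
using orthonormality $\langle p_\ell,p_{\ell'}\rangle=\delta_{\ell\ell'}$ to cancel the two identical coefficients.

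There is no real obstacle here: the statement is essentially a restatement of the fact that $\mathcal{P}_n$ is an orthogonal projector onto $\mathbb{P}_n$, applied to the $L^2$-function $K\mathcal{L}_nf$. The only subtlety worth flagging is the need for $K\in L^2(\Omega)$ (rather than merely $L^1(\Omega)$ as in Lemma~\ref{lem:21}): this is precisely what guarantees $K\mathcal{L}_nf\in L^2(\Omega)$ so that the $L^2$ inner product $\langle K\mathcal{L}_nf,\chi\rangle$, and hence the whole statement, makes sense.
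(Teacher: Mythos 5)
Your proof is correct and follows essentially the same route as the paper: you verify that the coefficients of $\mathcal{S}_nF$ coincide with $\langle K\mathcal{L}_nf,p_\ell\rangle$ via the orthonormality of $\{p_\ell\}$ and then extend to arbitrary $\chi\in\mathbb{P}_n$ by linearity. Your explicit identification $\mathcal{S}_nF=\mathcal{P}_n(K\mathcal{L}_nf)$ and the remark that $K\in L^2(\Omega)$ guarantees $K\mathcal{L}_nf\in L^2(\Omega)$ are just slightly more explicit versions of what the paper states alongside its proof.
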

\begin{proof}
For any basis polynomial $p_{\ell}\in\{p_{\ell}\}_{\ell=1}^{d_n}$ of $\mathbb{P}_n$, we have
$$\langle S_nF,p_{\ell}\rangle = \int_{\Omega}\left[\sum_{\ell'=1}^{d_n}\left(\int_{\Omega}K(\mathcal{L}_nf)p_{\ell'}\text{d}\omega\right)p_{\ell'}\right]p_{\ell}\text{d}\omega = \int_{\Omega}K(\mathcal{L}_nf)p_{\ell}\text{d}\omega = \langle K\mathcal{L}_nf,p_{\ell}\rangle. $$
Thus for any polynomial $\chi\in\mathbb{P}_n$ expressed by $\{p_{\ell}\}_{\ell=1}^{d_n}$, we also have
$\langle K\mathcal{L}_nf-\mathcal{S}_nF,\chi\rangle=0$.
\end{proof}

Lemma \ref{lem:22} suggests that $\mathcal{S}_nF$ is an orthogonal projection of $K\mathcal{L}_nf$ onto $\mathbb{P}_n$ as well as the following least squares property.
\begin{theorem}
Let $K\in L^2(\Omega)$. Then
\begin{equation*}
\langle K\mathcal{L}_nf-\mathcal{S}_nF,K\mathcal{L}_nf-\mathcal{S}_nF\rangle=\min_{\chi\in\mathbb{P}_n}\langle K\mathcal{L}_nf-\chi,K\mathcal{L}_nf-\chi\rangle.
\end{equation*}
\end{theorem}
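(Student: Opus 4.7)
The plan is to recognize this as the standard best-approximation property of an orthogonal projection: Lemma \ref{lem:22} tells us $\mathcal{S}_nF$ is (the) orthogonal projection of $K\mathcal{L}_nf$ onto $\mathbb{P}_n$, and orthogonal projections minimize the $L^2$ distance over the subspace. So the argument is a straightforward Pythagorean decomposition, with Lemma \ref{lem:22} doing all the real work.

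Concretely, I would start by fixing an arbitrary $\chi\in\mathbb{P}_n$ and splitting
\begin{equation*}
K\mathcal{L}_nf-\chi=(K\mathcal{L}_nf-\mathcal{S}_nF)+(\mathcal{S}_nF-\chi).
\end{equation*}
Since $\mathcal{S}_nF\in\mathbb{P}_n$ by construction and $\chi\in\mathbb{P}_n$ by assumption, the second summand lies in $\mathbb{P}_n$. Lemma \ref{lem:22} then applies with this summand playing the role of the test polynomial, giving $\langle K\mathcal{L}_nf-\mathcal{S}_nF,\mathcal{S}_nF-\chi\rangle=0$. Expanding the inner product of $K\mathcal{L}_nf-\chi$ with itself and discarding the vanishing cross terms yields
\begin{equation*}
\langle K\mathcal{L}_nf-\chi,K\mathcal{L}_nf-\chi\rangle=\langle K\mathcal{L}_nf-\mathcal{S}_nF,K\mathcal{L}_nf-\mathcal{S}_nF\rangle+\langle\mathcal{S}_nF-\chi,\mathcal{S}_nF-\chi\rangle.
\end{equation*}
The second term on the right is nonnegative and equals zero precisely when $\chi=\mathcal{S}_nF$, so the minimum over $\chi\in\mathbb{P}_n$ is attained there, proving the claim.

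I do not expect any real obstacle here. The only mild subtlety is that Lemma \ref{lem:22} is stated for $K\in L^2(\Omega)$, which is needed to guarantee that $K\mathcal{L}_nf\in L^2(\Omega)$ (so all inner products in the statement are well-defined, since $\mathcal{L}_nf$ is a polynomial and hence bounded on the bounded region $\Omega$). This matches the hypothesis of the theorem, so no additional integrability discussion is required.
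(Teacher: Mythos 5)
Your proposal is correct and follows essentially the same route as the paper: the same decomposition $K\mathcal{L}_nf-\chi=(K\mathcal{L}_nf-\mathcal{S}_nF)+(\mathcal{S}_nF-\chi)$, the same invocation of Lemma \ref{lem:22} to kill the cross term, and the same Pythagorean identity to conclude that the minimum is attained at $\chi=\mathcal{S}_nF$. No gaps.
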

\begin{proof}
For any $\chi\in\mathbb{P}_n$, we have $K\mathcal{L}_nf-\chi=K\mathcal{L}_nf-\mathcal{S}_nF+\mathcal{S}_nF-\chi$, and by Lemma \ref{lem:22}, we have $\langle K\mathcal{L}_nf-\mathcal{S}_nF,\mathcal{S}_nF-\chi\rangle = 0$. Thus, the Pythagorean theorem suggests
\begin{equation*}
\|K\mathcal{L}_nf-\mathcal{S}_nF\|_2^2+\|\mathcal{S}_nF-\chi\|_2^2 = \|K\mathcal{L}_nf-\chi\|_2^2,
\end{equation*}
which implies
\begin{equation*}
\|K\mathcal{L}_nf-\mathcal{S}_nF\|_2^2\leq \|K\mathcal{L}_nf-\chi\|_2^2\quad\text{for all } \chi\in\mathbb{P}_n
\end{equation*}
and
\begin{equation*}
\|K\mathcal{L}_nf-\mathcal{S}_nF\|_2^2= \|K\mathcal{L}_nf-\chi\|_2^2\quad\text{if }\chi=\mathcal{S}_nF.
\end{equation*}
Hence the theorem is proved.
\end{proof}

% \begin{remark}
% Lemma \ref{lem:22} in fact pertains to $\chi\in\mathbb{P}_N$ with any $N\in\mathbb{N}$.
% \end{remark}

\section{Implementation of efficient hyperinterpolation}\label{sec:implementation}

To implement efficient hyperinterpolation, the key step is to evaluate the coefficients of efficient hyperinterpolation \eqref{equ:efficienthyper}. Note that for $\ell=1,2,\ldots,d_n$, each coefficient
\begin{equation*}
\begin{split}
\int_{\Omega}K(\mathcal{L}_nf)p_{\ell}\text{d}\omega
&=\int_{\Omega}K\left[\sum_{\ell'=1}^{d_n}\left(\sum_{j=1}^mw_jf(x_j)p_{\ell'}(x_j)\right)p_{\ell'}\right]p_{\ell}\text{d}\omega\\
&=\sum_{j=1}^mw_j\left(\sum_{\ell'=1}^{d_n}p_{\ell'}(x_j)\int_{\Omega}Kp_{\ell'}p_{\ell}\text{d}\omega\right)f(x_j)\\
&=\sum_{j=1}^mW_{j\ell}f(x_j),
\end{split}
\end{equation*}
where
\begin{equation*}
W_{j\ell}:=w_j\left(\sum_{\ell'=1}^{d_n}p_{\ell'}(x_j)\int_{\Omega}Kp_{\ell'}p_{\ell}\text{d}\omega\right),\quad j=1,2,\ldots,m.
\end{equation*}
Thus, the weights $\{W_{j\ell}\}$ can be computed analytically or stably if one can evaluate
\begin{equation}\label{equ:alphaellell}
\alpha_{\ell'\ell}:=\int_{\Omega}Kp_{\ell'}p_{\ell}\text{d}\omega,\quad 1\leq \ell',\ell\leq d_n
\end{equation}
analytically or stably. Note that $p_{\ell'}p_{\ell}$ is another polynomial of degree $n_1+n_2$, where $n_1:=\deg{p_{\ell'}}$ and $n_2:=\deg{p_{\ell}}$. Then it can be expanded as
\begin{equation*}
p_{\ell'}p_{\ell}=\sum_{r=1}^{d_{n_1+n_2}}c_rq_r,
\end{equation*}
where $\{q_r\}_{r=1}^{d_{2n}}$ is an orthonormal basis of $\mathbb{P}_{2n}$, which could be chosen from the same orthogonal family of $\{p_{\ell}\}$ or not, and the coefficients
\begin{equation}\label{equ:cr}
c_r:= \int_{\Omega} p_{\ell'}p_{\ell}q_r\text{d}\mu,\quad r=1,2,\ldots,d_{n_1+n_2}.
\end{equation}
In the expression \eqref{equ:cr}, $\text{d}\mu$ is the Lebesgue--Stieltjes measure associated with $\mu$. Sometimes we may have $\text{d}\mu(x)=\mu(x)\text{d}x$, and $\mu(x)$ is referred to as the weight function of the orthogonal family $\{q_r\}$.

As introduced in Introduction, it is generally assumed that the modified moments
\begin{equation}\label{equ:betar}
\beta_r:=\int_{\Omega}Kq_r\text{d}\omega
\end{equation}
can be computed by using special functions or invoking some stable iterative procedures. In the implementation of efficient hyperinterpolation, we assume that $\beta_r$ can be computed analytically or stably for $r=1,2,\ldots,d_{2n}$. Thus, the weights
\begin{equation}\label{equ:Wjl}
W_{j\ell}=w_j\sum_{\ell'=1}^{d_n}p_{\ell'}(x_j)\alpha_{\ell'\ell}=w_j\left[\sum_{\ell'=1}^{d_n}p_{\ell'}(x_j)\left(\sum_{r=1}^{d_{n_1+n_2}}c_r\beta_r\right)\right]
\end{equation}
can be computed analytically or stably for $j=1,2,\ldots,m$ and $\ell=1,2,\ldots,d_n$.

The above discussion suggests how to implement efficient hyperinterpolation \eqref{equ:efficienthyper} in the form of
\begin{equation}\label{equ:implementation}
\mathcal{S}_{n}F=\sum_{\ell=1}^{d_n}\left(\sum_{j=1}^mW_{j\ell}f(x_j)\right)p_{\ell}.
\end{equation}
Here is a pseudocode describing the whole procedure.

% \texttt{\quad Compute the modified moments \eqref{equ:betar} for $r=1,2,\ldots,d_{2n}$, save as $\{\beta_r\}_{r=1}^{d_{2n}}$;}

% \texttt{\quad \emph{for} $\ell=1:d_n$}

% \texttt{\quad\quad \emph{for} $\ell'=1:d_n$}

% \texttt{\quad\quad\quad Compute the expansion coefficients \eqref{equ:cr} of $p_{\ell'}p_{\ell}$, save as $\{c_r\}_{r=1}^{d_{n_1+n_2}}$;}

% \texttt{\quad\quad\quad Compute $\alpha_{\ell'\ell}$ in terms of
% $$\alpha_{\ell'\ell}=\sum_{r=1}^{d_{n_1+n_2}}c_r\beta_r;$$}

% \texttt{\quad\quad \emph{end}}

% \texttt{\quad\quad Compute the weights $\{W_{j\ell}\}_{j=1}^m$ in terms of \eqref{equ:Wjl};}

% \texttt{\quad \emph{end}}

% \texttt{\quad Construct the efficient hyperinterpolant $\mathcal{S}_{n}F$ in terms of \eqref{equ:implementation}.}

\begin{center}
\noindent\fbox{
    \parbox{.9\linewidth}{
    \textbf{Algorithm. Efficient hyperinterpolant \eqref{equ:efficienthyper} for the approximation of $F=Kf$}\vspace{0.15cm}

    \texttt{Compute the modified moments \eqref{equ:betar} for $r=1,2,\ldots,d_{2n}$, save as $\{\beta_r\}_{r=1}^{d_{2n}}$;}

    \texttt{for $\ell=1:d_n$}

    \texttt{\quad for $\ell'=1:d_n$}

    \texttt{\quad\quad for $r=1:d_{n_1+n_2}$}

    \texttt{\quad\quad\quad $c_r=\langle p_{\ell'}p_{\ell},q_r\rangle$;}

    \texttt{\quad\quad end}

    \texttt{\quad\quad $\alpha_{\ell'\ell}=\sum_{r=1}^{d_{n_1+n_2}}c_r\beta_r$;}

    \texttt{\quad end}

    \texttt{\quad for $j=1:m$}

    \texttt{\quad\quad $W_{j\ell}= w_j\sum_{\ell'=1}^{d_n}p_{\ell'}(x_j)\alpha_{\ell'\ell}$}

    \texttt{\quad end}

    \texttt{end}

    \texttt{$\mathcal{S}_{n}F=\sum_{\ell=1}^{d_n}\left(\sum_{j=1}^mW_{j\ell}f(x_j)\right)p_{\ell}$.}
}
}
\end{center}
It can be seen that this algorithm is easy to be implemented.

% \begin{center}
% \noindent\fbox{
%     \parbox{.9\linewidth}{
%     \textbf{Algorithm. Efficient hyperinterpolant \eqref{equ:efficienthyper} for the approximation of $F=Kf$}\vspace{0.15cm}

%     \texttt{Compute the modified moments \eqref{equ:betar} for $r=1,2,\ldots,d_{2n}$, save as $\{\beta_r\}_{r=1}^{d_{2n}}$;}

%     \texttt{$\alpha_{\ell'\ell}=0$ for $\ell'=1,2,\ldots,d_n$ and $\ell=1,2,\ldots,d_n$;}

%     \texttt{$W_{j\ell}=0$ for $j=1,2,\ldots,m$ and $\ell=1,2,\ldots,d_n$;}

%     \text{$\mathcal{S}_nF=0$;}\vspace{0.15cm}

%     \texttt{for $\ell=1:d_n$}

%     \texttt{\quad for $\ell'=1:d_n$}

%     \texttt{\quad\quad for $r=1:d_{n_1+n_2}$}

%     \texttt{\quad\quad\quad $c_r=\langle p_{\ell'}p_{\ell},q_r\rangle$;}

%     \texttt{\quad\quad\quad $\alpha_{\ell'\ell}=\alpha_{\ell'\ell} + c_r\beta_r$;}

%     \texttt{\quad\quad end}

%     \texttt{\quad\quad for $j=1:m$}

%     \texttt{\quad\quad\quad $W_{j\ell}= W_{j\ell} +  w_jp_{\ell'}(x_j)\alpha_{\ell'\ell}$;}

%     \texttt{\quad\quad end}

%     \texttt{\quad end}

%     \texttt{\quad $\mathcal{S}_{n}F=\mathcal{S}_{n}F + \left(\sum_{j=1}^m W_{j\ell}f(x_j)\right)p_{\ell}$;}

%     \texttt{end}
% }
% }
% \end{center}

\section{Exploratory estimate: $K\in L^1(\Omega)$}\label{sec:general}
We now analyze efficient hyperinterpolation for the approximation of $F=Kf$ when $K\in L^1(\Omega)$. This case is the most general one among $K\in L^1(\Omega)$, $L^2(\Omega)$, and $\mathcal{C}(\Omega)$, as there holds $\mathcal{C}(\Omega)\subset L^2(\Omega)\subset L^1(\Omega)$ for a bounded and closed subset $\Omega$ of $\mathbb{R}^s$. As $L^1(\Omega)$ does not carry any inner products, we can only give a general but rough analysis. What's more, since $F=Kf\in L^1(\Omega)$, we can only give an $L^1$ error analysis. We shall refine our analysis in the next section by assuming $K\in L^2(\Omega)$ and $\mathcal{C}(\Omega)$.

\begin{theorem}\label{thm1}
Given $K\in L^1(\Omega)$ and $f\in\mathcal{C}(\Omega)$, let $F=Kf$ and let $\mathcal{S}_nF$ be defined as \eqref{equ:efficienthyper}, where the $m$-point quadrature rule \eqref{equ:quad} fulfills the Assumption \ref{assumption}. Then
\begin{equation}\label{equ:L1stability}
\|\mathcal{S}_nF\|_2\leq \frac{V^{1/2} A_n}{\sqrt{1-\eta}}\|f\|_{\infty},
\end{equation}
where
$$A_n=\sqrt{\sum_{\ell=1}^{d_n}\sum_{\ell'=1}^{d_n}\alpha_{\ell'\ell}^2}$$
with $\alpha_{\ell'\ell}$ defined as \eqref{equ:alphaellell}, and
\begin{equation}\label{equ:L1error}
\|\mathcal{S}_nF-F\|_1\leq \left(\frac{VA_n}{\sqrt{1-\eta}}+\|K\|_1\right)E_{n'}(f)+\left(V^{1/2}\sum_{\ell=1}^{d_n}\|p_{\ell}\|_{\infty}+1\right)E_n^{(1)}(K\chi^*),
\end{equation}
where $E_n^{(1)}(g):=\inf_{\chi\in\mathbb{P}_n}\|g-\chi\|_1$, and $\chi^*\in\mathbb{P}_{n'}$ is the best uniform approximation of $f$ in $\mathbb{P}_{n'}$.
\end{theorem}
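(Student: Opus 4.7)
The stability bound \eqref{equ:L1stability} comes by expanding $\mathcal{S}_n F$ in the orthonormal basis and invoking Parseval. Substituting $\mathcal{L}_n f = \sum_{\ell'=1}^{d_n} \langle f, p_{\ell'}\rangle_m\, p_{\ell'}$ into the definition \eqref{equ:efficienthyper}, the $\ell$-th coefficient of $\mathcal{S}_n F$ equals $\sum_{\ell'=1}^{d_n} \langle f, p_{\ell'}\rangle_m\, \alpha_{\ell'\ell}$, so Parseval followed by Cauchy--Schwarz in $\ell'$ gives
\[
\|\mathcal{S}_n F\|_2^2 = \sum_{\ell=1}^{d_n} \Bigl(\sum_{\ell'=1}^{d_n} \langle f, p_{\ell'}\rangle_m\, \alpha_{\ell'\ell}\Bigr)^{\!2} \leq A_n^2 \sum_{\ell'=1}^{d_n} \langle f, p_{\ell'}\rangle_m^2 = A_n^2 \|\mathcal{L}_n f\|_2^2.
\]
Then the stability estimate \eqref{equ:recentstability} applied to $\mathcal{L}_n f$ closes \eqref{equ:L1stability}.

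For the error bound, the natural pivot is $\chi^* \in \mathbb{P}_{n'}$, since Lemma \ref{lem:21} gives $\mathcal{S}_n(K\chi^*) = \mathcal{P}_n(K\chi^*)$. I would write
\[
\mathcal{S}_n F - F = \mathcal{S}_n\bigl(K(f - \chi^*)\bigr) + \bigl[\mathcal{P}_n(K\chi^*) - K\chi^*\bigr] + K(\chi^* - f)
\]
and estimate the three pieces in $\|\cdot\|_1$ separately. For the first, the embedding $\|\cdot\|_1 \leq V^{1/2}\|\cdot\|_2$ (valid because $V<\infty$) reduces it to \eqref{equ:L1stability} applied with $f - \chi^*$ in place of $f$, producing the $\tfrac{VA_n}{\sqrt{1-\eta}}E_{n'}(f)$ contribution. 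The third is immediate from H\"older: $\|K(\chi^* - f)\|_1 \leq \|K\|_1 \|\chi^* - f\|_\infty = \|K\|_1 E_{n'}(f)$.

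The middle piece is the main technical hurdle, because $L^1$ carries no inner-product structure and $\mathcal{P}_n$ is not an $L^1$ projection. I would introduce $\phi \in \mathbb{P}_n$ realizing $E_n^{(1)}(K\chi^*)$, use $\mathcal{P}_n\phi = \phi$ to write $\mathcal{P}_n(K\chi^*) - K\chi^* = \mathcal{P}_n(K\chi^* - \phi) - (K\chi^* - \phi)$, and then establish the auxiliary bound
\[
\|\mathcal{P}_n g\|_1 \leq \sum_{\ell=1}^{d_n} |\langle g, p_\ell\rangle|\, \|p_\ell\|_1 \leq V^{1/2}\|g\|_1 \sum_{\ell=1}^{d_n} \|p_\ell\|_\infty,
\]
using $|\langle g, p_\ell\rangle| \leq \|g\|_1 \|p_\ell\|_\infty$ (H\"older) and $\|p_\ell\|_1 \leq V^{1/2}\|p_\ell\|_2 = V^{1/2}$ (Cauchy--Schwarz plus orthonormality). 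Applied with $g = K\chi^* - \phi$, this contributes $V^{1/2}\bigl(\sum_\ell \|p_\ell\|_\infty\bigr)E_n^{(1)}(K\chi^*)$, and adding the residual $\|K\chi^* - \phi\|_1 = E_n^{(1)}(K\chi^*)$ gives the $\bigl(V^{1/2}\sum_\ell \|p_\ell\|_\infty + 1\bigr)E_n^{(1)}(K\chi^*)$ factor. Summing the three bounds yields \eqref{equ:L1error}.
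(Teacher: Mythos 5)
Your proposal is correct and follows essentially the same route as the paper's proof: the same Parseval plus Cauchy--Schwarz argument giving $\|\mathcal{S}_nF\|_2\leq A_n\|\mathcal{L}_nf\|_2$, the same decomposition pivoting on $\chi^*$ with Lemma \ref{lem:21}, and the same constants. The only cosmetic difference is in the middle term: you bound $\|\mathcal{P}_n g\|_1$ termwise using $\|p_\ell\|_1\leq V^{1/2}$, whereas the paper first writes $\|\mathcal{P}_n g\|_1\leq V^{1/2}\|\mathcal{P}_n g\|_2$ and then bounds $\|\mathcal{P}_n g\|_2\leq \|g\|_1\sum_{\ell}\|p_\ell\|_{\infty}$ --- both yield the identical factor $V^{1/2}\sum_{\ell}\|p_\ell\|_{\infty}+1$.
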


\begin{proof}
By Parseval's identity, we have
\begin{equation*}
\|\mathcal{S}_nF\|_2^2=\sum_{\ell=1}^{d_n}\left(\int_{\Omega}K(\mathcal{L}_nf)p_{\ell}\text{d}\omega\right)^2=
\sum_{\ell=1}^{d_n}\left(\sum_{\ell'=1}^{d_n}\langle f,p_{\ell'}\rangle_m\int_{\Omega}Kp_{\ell'}p_{\ell}\text{d}\omega\right)^2.
\end{equation*}
By applying the Cauchy--Schwarz inequality and Parseval's identity again, we have
\begin{equation*}
\|\mathcal{S}_nF\|_2^2\leq \sum_{\ell=1}^{d_n}\left(\sum_{\ell'=1}^{d_n}\langle f,p_{\ell'}\rangle_m^2\right)\left(\sum_{\ell'=1}^{d_n}\alpha_{\ell'\ell}^2\right)=\|\mathcal{L}_nf\|_2^2\sum_{\ell=1}^{d_n}\sum_{\ell'=1}^{d_n}\alpha_{\ell'\ell}^2,
\end{equation*}
which leads to $\|\mathcal{S}_nF\|_2\leq A_n\|\mathcal{L}_nf\|_2$. By the stability result \eqref{equ:recentstability} with $F$ changed to $f$, we have the stability result \eqref{equ:L1stability}. For any $\chi\in\mathbb{P}_{n'}$, we have
\begin{equation*}\begin{split}
\|\mathcal{S}_nF-F\|_1
&=     \|\mathcal{S}_n(F-K\chi)-(F-K\chi)+(\mathcal{S}_n(K\chi)-K\chi)\|_1\\
&\leq  V^{1/2}\|\mathcal{S}_n(F-K\chi)\|_2+\|F-K\chi\|_1+\|\mathcal{S}_n(K\chi)-K\chi\|_1\\
&\leq  \frac{VA_n}{\sqrt{1-\eta}}\|f-\chi\|_{\infty}+\|K\|_1\|f-\chi\|_{\infty}+\|\mathcal{S}_n(K\chi)-K\chi\|_1,
\end{split}\end{equation*}
where the last inequality is obtained by applying the stability result \eqref{equ:L1stability} and H\"{o}lder's inequality to $F-K\chi=K(f-\chi)$, respectively. As the above estimate applied to an arbitrary $\chi\in\mathbb{P}_{n'}$, letting $\chi=\chi^*$ gives
\begin{equation}\label{equ:L1intermediate}
\|\mathcal{S}_nF-F\|_1
\leq  \left(\frac{VA}{\sqrt{1-\eta}}+\|K\|_1\right)E_{n'}(f)+\|\mathcal{S}_n(K\chi^*)-K\chi^*\|_1.
\end{equation}
By Lemma \ref{lem:21}, the term $\|\mathcal{S}_n(K\chi^*)-K\chi^*\|_1=\|\mathcal{P}_n(K\chi^*)-K\chi^*\|_1$. Thus for any $\chi\in\mathbb{P}_n$, we have $\mathcal{P}_n(K\chi^*)-K\chi^* = \mathcal{P}_n(K\chi^*-\chi)-(K\chi^*-\chi)$ and
\begin{equation*}\begin{split}
\|\mathcal{P}_n(K\chi^*)-K\chi^*\|_1 \leq V^{1/2} \|\mathcal{P}_n(K\chi^*-\chi)\|_2+\|K\chi^*-\chi\|_1.
\end{split}\end{equation*}
As for any $g\in L^1(\Omega)$, there holds
\begin{equation*}
\|\mathcal{P}_ng\|_2
=\left(\sum_{\ell=1}^{d_n}\left(\int_{\Omega}gp_{\ell}\text{d}\omega\right)^2\right)^{1/2}\leq \sum_{\ell=1}^{d_n}\left|\int_{\Omega}gp_{\ell}\text{d}\omega\right|\leq \sum_{\ell=1}^{d_n}\|gp_{\ell}\|_1\leq \|g\|_1\sum_{\ell=1}^{d_n}\|p_{\ell}\|_{\infty},
\end{equation*}
we have
\begin{equation*}
\|\mathcal{P}_n(K\chi^*)-K\chi^*\|_1\leq \left(V^{1/2}\sum_{\ell=1}^{d_n}\|p_{\ell}\|_{\infty}+1\right)\|K\chi^*-\chi\|_1.
\end{equation*}
Since this estimate applied to an arbitrary $\chi\in\mathbb{P}_n$, we have
\begin{equation*}
\|\mathcal{P}_n(K\chi^*)-K\chi^*\|_1\leq \left(V^{1/2}\sum_{\ell=1}^{d_n}\|p_{\ell}\|_{\infty}+1\right)E_n^{(1)}(K\chi^*).
\end{equation*}
Together with \eqref{equ:L1intermediate}, we have the error bound \eqref{equ:L1error}.
\end{proof}

\section{Refined estimates: $K\in L^2(\Omega)$ and $\mathcal{C}(\Omega)$}\label{sec:refined}

We then refine our general analysis in Section \ref{sec:general} by assuming $K\in L^2(\Omega)$ and $\mathcal{C}(\Omega)$, respectively. Inner products emerge as a powerful tool in such refinement. For example, we used the estimate $\|\mathcal{P}_ng\|_2\leq \|g\|_1\sum_{\ell=1}^{d_n}\|p_{\ell}\|_{\infty}$ for $g\in L^1(\Omega)$ in the proof of Theorem \ref{thm1}, but we have
\begin{equation}\label{equ:Pg2}
\|\mathcal{P}_ng\|_2\leq\|g\|_2\quad\forall g\in L^2(\Omega),
\end{equation}
and
\begin{equation}\label{equ:Pgc}
\|\mathcal{P}_ng\|_2\leq V^{1/2}\|g\|_{\infty}\quad\forall g\in \mathcal{C}(\Omega)
\end{equation}
with the aid of inner products. Indeed, according to the construction \eqref{equ:proj} of $\mathcal{P}_n$, we have $\langle \mathcal{P}_ng-g,\chi \rangle=0$ for all $\chi\in\mathbb{P}_n$. Thus, letting $\chi=\mathcal{P}_ng$, the Cauchy--Schwarz inequality implies
\begin{equation*}
\|\mathcal{P}_ng\|_2^2=\langle g,\mathcal{P}_ng\rangle\leq \|g\|_2\|\mathcal{P}_ng\|_2,
\end{equation*}
hence $\|\mathcal{P}_ng\|_2\leq \|g\|_2$ for $g\in L^2(\Omega)$. By generalized H\"{o}lder's inequality\footnote{There are many generalizations of the classical H\"{o}lder's inequality. In this paper, by \emph{generalized H\"{o}lder's inequality}, we mean the following one \cite[p. 186]{MR767633}. For $r,p_1,p_2,\ldots,p_n\in (0,\infty]$ and $f_k\in L^{p_k}(\Omega)$, $k=1,2,\ldots,n$, if
\begin{equation*}
\sum_{k=1}^n\frac{1}{p_k}=\frac{1}{r},
\end{equation*}
where $1/\infty$ is interpreted as $0$, then $\prod_{k=1}^nf_k\in L^r(\Omega)$ and there holds
\begin{equation*}
\left\|\prod_{k=1}^nf_k\right\|_r\leq \prod_{k=1}^n\|f_k\|_{p_k}.
\end{equation*}
}, $\|g\|_2\leq V^{1/2}\|g\|_{\infty}$ for $g\in\mathcal{C}(\Omega)$, thus $\|\mathcal{P}_ng\|_2\leq V^{1/2}\|g\|_{\infty}$ for $g\in\mathcal{C}(\Omega)$.

\subsection{Analysis with $K\in L^2(\Omega)$}
When $K\in L^2(\Omega)$, we have the following theorem.
\begin{theorem}\label{thm2}
Let $K\in L^1(\Omega)\bigcap L^2(\Omega)$ and adopt the rest conditions of Theorem \ref{thm1}. Then
\begin{equation}\label{equ:L2stability}
\|\mathcal{S}_nF\|_2\leq\|K\|_2\|\mathcal{L}_n\|_{\infty}\|f\|_{\infty},
\end{equation}
where $\|\mathcal{L}_n\|_{\infty}$ denotes the norm of $\mathcal{L}_n$ as an operator from $\mathcal{C}(\Omega)$ to $\mathcal{C}(\Omega)$, and
\begin{equation}\label{equ:L2error}
\|\mathcal{S}_nF-F\|_2\leq(1+\|\mathcal{L}_n\|_{\infty}) \|K\|_2E_{n'}(f)+2E_n^{(2)}(K\chi^*),
\end{equation}
where $E_n^{(2)}(g):=\inf_{\chi\in\mathbb{P}_n}\|g-\chi\|_2$, and $\chi^*\in\mathbb{P}_{n'}$ is the best uniform approximation of $f$ in $\mathbb{P}_{n'}$.
\end{theorem}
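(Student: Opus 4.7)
The plan is to mirror the structure of the proof of Theorem \ref{thm1} but replace the crude $L^1$-type bound $\|\mathcal{P}_n g\|_2 \le \|g\|_1 \sum_\ell \|p_\ell\|_\infty$ by the sharper $L^2$ bound \eqref{equ:Pg2}, and to exploit Lemma \ref{lem:22}, which says $\mathcal{S}_nF$ is the $L^2$-orthogonal projection of $K\mathcal{L}_nf$ onto $\mathbb{P}_n$.

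For the stability bound \eqref{equ:L2stability}, I would first observe that $K\mathcal{L}_nf$ lies in $L^2(\Omega)$: since $\mathcal{L}_nf\in\mathbb{P}_n\subset\mathcal{C}(\Omega)$, generalized H\"older gives $\|K\mathcal{L}_nf\|_2\le\|K\|_2\|\mathcal{L}_nf\|_\infty$, and the definition of the operator norm yields $\|\mathcal{L}_nf\|_\infty\le\|\mathcal{L}_n\|_\infty\|f\|_\infty$. Applying Lemma \ref{lem:22} with $\chi=\mathcal{S}_nF$ and the Cauchy--Schwarz inequality (exactly as in the derivation of \eqref{equ:Pg2}) gives $\|\mathcal{S}_nF\|_2\le\|K\mathcal{L}_nf\|_2$, and chaining these three inequalities yields \eqref{equ:L2stability}.

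For the error bound \eqref{equ:L2error}, I would introduce an arbitrary $\chi\in\mathbb{P}_{n'}$ and decompose
\begin{equation*}
\mathcal{S}_nF-F=\mathcal{S}_n(F-K\chi)-(F-K\chi)+\bigl(\mathcal{S}_n(K\chi)-K\chi\bigr).
\end{equation*}
Taking $L^2$ norms and applying the triangle inequality splits the problem into three pieces. The first piece is handled by the stability bound just established, applied to the function $K(f-\chi)$ (whose ``$f$'' factor is $f-\chi$), giving $\|K\|_2\|\mathcal{L}_n\|_\infty\|f-\chi\|_\infty$. The second piece, $\|F-K\chi\|_2=\|K(f-\chi)\|_2$, is bounded by $\|K\|_2\|f-\chi\|_\infty$ via generalized H\"older. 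Choosing $\chi=\chi^*$ the best uniform $\mathbb{P}_{n'}$-approximation of $f$ combines these two into $(1+\|\mathcal{L}_n\|_\infty)\|K\|_2 E_{n'}(f)$.

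The third piece is the heart of the refinement. By Lemma \ref{lem:21}, $\mathcal{S}_n(K\chi^*)=\mathcal{P}_n(K\chi^*)$, so it suffices to bound $\|\mathcal{P}_n(K\chi^*)-K\chi^*\|_2$. For any $\chi_1\in\mathbb{P}_n$ we may write $\mathcal{P}_n(K\chi^*)-K\chi^*=\mathcal{P}_n(K\chi^*-\chi_1)-(K\chi^*-\chi_1)$, and \eqref{equ:Pg2} applied to $g=K\chi^*-\chi_1\in L^2(\Omega)$ produces the bound $2\|K\chi^*-\chi_1\|_2$. Infimizing over $\chi_1\in\mathbb{P}_n$ yields $2E_n^{(2)}(K\chi^*)$. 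Summing the three pieces gives \eqref{equ:L2error}. I do not anticipate any genuine obstacle here; the only delicate point is to make sure $K\chi^*\in L^2(\Omega)$ so that \eqref{equ:Pg2} legitimately applies, which follows from $K\in L^2(\Omega)$ and $\chi^*\in\mathbb{P}_{n'}\subset\mathcal{C}(\Omega)$ via H\"older once more.
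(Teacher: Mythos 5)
Your proposal is correct and follows essentially the same route as the paper: Lemma \ref{lem:22} with Cauchy--Schwarz and generalized H\"older for the stability bound \eqref{equ:L2stability}, the identical three-term decomposition with $\chi=\chi^*$ for the error, and Lemma \ref{lem:21} together with \eqref{equ:Pg2} to reduce the last term to $2E_n^{(2)}(K\chi^*)$. Your remark on checking $K\chi^*\in L^2(\Omega)$ is a small point of added care not spelled out in the paper, but it does not change the argument.
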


\begin{proof}
As $\mathcal{S}_nF\in\mathbb{P}_n$, Lemma \ref{lem:22} suggests $\langle \mathcal{S}_nF,\mathcal{S}_nF\rangle = \langle K\mathcal{L}_nf,\mathcal{S}_nF\rangle$. Then, by the Cauchy--Schwarz inequality and generalized H\"{o}lder's inequality,
\begin{equation*}
\|\mathcal{S}_nF\|_2^2\leq\|K\mathcal{L}_nf\|_2\|\mathcal{S}_nF\|_2\leq \|K\|_2\|\mathcal{L}_nf\|_{\infty}\|\mathcal{S}_nF\|_2,
\end{equation*}
which leads to $\|\mathcal{S}_nF\|_2\leq\|K\|_2\|\mathcal{L}_n\|_{\infty}\|f\|_{\infty}$. For any $\chi\in\mathbb{P}_{n'}$, we have
\begin{equation*}\begin{split}
\|\mathcal{S}_nF-F\|_2
&=     \|\mathcal{S}_n(F-K\chi)-(F-K\chi)+(\mathcal{S}_n(K\chi)-K\chi)\|_2\\
&\leq  \|\mathcal{S}_n(F-K\chi)\|_2+\|F-K\chi\|_2+\|\mathcal{S}_n(K\chi)-K\chi\|_2\\
&\leq  \|K\|_2 \|\mathcal{L}_n\|_{\infty} \|f-\chi\|_{\infty}+\|K\|_2\|f-\chi\|_{\infty}+\|\mathcal{S}_n(K\chi)-K\chi\|_2,
\end{split}\end{equation*}
where the last inequality is obtained by applying the stability \eqref{equ:L2stability} and generalized H\"{o}lder's inequality to $F-K\chi=K(f-\chi)$, respectively. Letting $\chi=\chi^*$ gives
\begin{equation}\label{equ:L2intermediate}
\|\mathcal{S}_nF-F\|_2
\leq  \left(\|\mathcal{L}\|_{\infty}+1\right)\|K\|_2E_{n'}(f)+\|\mathcal{S}_n(K\chi^*)-K\chi^*\|_2.
\end{equation}

Similar to the proof of Theorem \ref{thm1}, Lemma \ref{lem:21} implies $\|\mathcal{S}_n(K\chi^*)-K\chi^*\|_2=\|\mathcal{P}_n(K\chi^*)-K\chi^*\|_2$. By the estimate \eqref{equ:Pg2}, for any $\chi\in\mathbb{P}_n$, we have
\begin{equation*}\begin{split}
\|\mathcal{P}_n(K\chi^*)-K\chi^*\|_2\leq \|\mathcal{P}_n(K\chi^*-\chi)\|_2+\|K\chi^*-\chi\|_2\leq 2 \|K\chi^*-\chi\|_2.
\end{split}\end{equation*}
Since this estimate applied to an arbitrary $\chi\in\mathbb{P}_n$, we have $\|\mathcal{P}_n(K\chi^*)-K\chi^*\|_2\leq 2E_n^{(2)}(K\chi^*)$. Together with \eqref{equ:L2intermediate}, we have the error bound \eqref{equ:L2error}.
\end{proof}

\begin{remark}
For $\mathcal{L}_n$ with quadrature exactness $2n$, some studies on $\|\mathcal{L}_n\|_{\infty}$ in various regions can be found in \cite{caliari2007hyperinterpolation,caliari2008hyperinterpolation,MR2491427,le2001uniform,zbMATH01421286,MR1744380,wang2014norm}. This operator norm awaits further investigation for $\mathcal{L}_n$ with quadrature exactness $n+n'$ ($0<n'< n$). Nevertheless, the norm $\|\mathcal{L}_n\|_{\infty}$ cannot be uniformly bounded by any constant in general.
\end{remark}

\begin{remark}
The fact that $\|\mathcal{L}_n\|_{\infty}$ is not uniformly bounded has spurred the development of \emph{filtered hyperinterpolation} on the sphere and then on general regions \cite{MR4226998,montufar2021distributed,sloan2012filtered}. The filtered hyperinterpolation operator, as an operator from $\mathcal{C}(\Omega)\rightarrow\mathcal{C}(\Omega)$, has a uniformly bounded norm. Thus, a possible future work may be the combination of efficient and filtered hyperinterpolation so that a better result of the approximation of $F=Kf$ with $K\in L^2(\Omega)$ can be expected.
\end{remark}

\subsection{Analysis with $K\in\mathcal{C}(\Omega)$}
If $K$ is continuous, then we have the following analysis.
\begin{theorem}\label{thm3}
Let $K\in L^1(\Omega)\bigcap L^2(\Omega)\bigcap\mathcal{C}(\Omega) $ and adopt the rest conditions of Theorem \ref{thm1}. Then
\begin{equation}\label{equ:contstability}
\|\mathcal{S}_nF\|_2\leq \frac{V^{1/2}}{\sqrt{1-\eta}}\|K\|_{\infty}\|f\|_{\infty},
\end{equation}
where $\|\mathcal{L}_n\|_{\infty}$ denotes the norm of $\mathcal{L}_n$ as an operator from $\mathcal{C}(\Omega)$ to $\mathcal{C}(\Omega)$, and
\begin{equation}\label{equ:conterror}
\|\mathcal{S}_nF-F\|_2\leq \left(\frac{V^{1/2}}{\sqrt{1-\eta}}\|K\|_{\infty}+\|K\|_2\right)E_{n'}(f)+2V^{1/2}E_n(K\chi^*),
\end{equation}
where $\chi^*\in\mathbb{P}_{n'}$ is the best uniform approximation of $f$ in $\mathbb{P}_{n'}$.
\end{theorem}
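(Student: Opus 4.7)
The plan is to mimic the structure of the proof of Theorem \ref{thm2}, but exploit the extra regularity $K\in\mathcal{C}(\Omega)$ to pull $\|K\|_\infty$ out via H\"older instead of $\|K\|_2$, and then invoke the $L^2$ stability \eqref{equ:recentstability} of the classical hyperinterpolant $\mathcal{L}_n$ rather than the uniform operator norm $\|\mathcal{L}_n\|_\infty$. Concretely, for the stability bound I would start from Lemma \ref{lem:22}, which, since $\mathcal{S}_nF\in\mathbb{P}_n$, gives $\langle\mathcal{S}_nF,\mathcal{S}_nF\rangle=\langle K\mathcal{L}_nf,\mathcal{S}_nF\rangle$. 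Cauchy--Schwarz and the simple H\"older estimate $\|K\mathcal{L}_nf\|_2\leq\|K\|_\infty\|\mathcal{L}_nf\|_2$ then yield $\|\mathcal{S}_nF\|_2\leq\|K\|_\infty\|\mathcal{L}_nf\|_2$, after which \eqref{equ:recentstability} (applied with $F$ replaced by $f$) produces exactly \eqref{equ:contstability}.

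For the error bound I would carry out the same triangle inequality decomposition used in Theorems \ref{thm1} and \ref{thm2}: for any $\chi\in\mathbb{P}_{n'}$, write $\mathcal{S}_nF-F=\mathcal{S}_n(F-K\chi)-(F-K\chi)+(\mathcal{S}_n(K\chi)-K\chi)$ and apply the $L^2$ triangle inequality. The first summand is controlled by the new stability \eqref{equ:contstability} applied to $K(f-\chi)$, giving $\frac{V^{1/2}}{\sqrt{1-\eta}}\|K\|_\infty\|f-\chi\|_\infty$; the second summand is bounded via generalized H\"older by $\|K\|_2\|f-\chi\|_\infty$. Setting $\chi=\chi^*$ extracts the factor $E_{n'}(f)$ in front of both of these terms.

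It then remains to handle the residual $\|\mathcal{S}_n(K\chi^*)-K\chi^*\|_2$. Lemma \ref{lem:21} rewrites this as $\|\mathcal{P}_n(K\chi^*)-K\chi^*\|_2$, and I would invoke the estimate \eqref{equ:Pgc} (which uses $K\chi^*\in\mathcal{C}(\Omega)$) in the following way: for any $\chi\in\mathbb{P}_n$, triangle inequality gives $\|\mathcal{P}_n(K\chi^*)-K\chi^*\|_2\leq\|\mathcal{P}_n(K\chi^*-\chi)\|_2+\|K\chi^*-\chi\|_2$, and \eqref{equ:Pgc} bounds the first term by $V^{1/2}\|K\chi^*-\chi\|_\infty$, while $\|K\chi^*-\chi\|_2\leq V^{1/2}\|K\chi^*-\chi\|_\infty$ by generalized H\"older. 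Taking the infimum over $\chi\in\mathbb{P}_n$ yields $\|\mathcal{P}_n(K\chi^*)-K\chi^*\|_2\leq 2V^{1/2}E_n(K\chi^*)$, and assembling all pieces gives \eqref{equ:conterror}.

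I do not foresee a serious obstacle: every ingredient (Lemmas \ref{lem:21} and \ref{lem:22}, the estimate \eqref{equ:recentstability} for $\mathcal{L}_n$, the $\mathcal{C}(\Omega)$ projection bound \eqref{equ:Pgc}, and generalized H\"older) is already in hand. The only subtlety worth being careful about is the consistent use of $\|K\|_\infty$ versus $\|K\|_2$: $\|K\|_\infty$ is appropriate for multiplying $\mathcal{L}_nf$ in $L^2$ norm (so as to retain the clean $L^2$-stability of $\mathcal{L}_n$ and avoid the unbounded $\|\mathcal{L}_n\|_\infty$), whereas $\|K\|_2$ is the natural constant when estimating $\|K(f-\chi)\|_2$ against $\|f-\chi\|_\infty$; both appear in the stated bound and must arise from the correct step.
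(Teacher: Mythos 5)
Your proposal is correct and follows essentially the same route as the paper: Lemma \ref{lem:22} plus Cauchy--Schwarz to get $\|\mathcal{S}_nF\|_2\leq\|K\mathcal{L}_nf\|_2$, then generalized H\"older with $\|K\|_\infty$ and the stability \eqref{equ:recentstability} of $\mathcal{L}_n$ for \eqref{equ:contstability}, followed by the same three-term decomposition with $\chi=\chi^*$ and the reduction of $\|\mathcal{S}_n(K\chi^*)-K\chi^*\|_2$ to $\|\mathcal{P}_n(K\chi^*)-K\chi^*\|_2\leq 2V^{1/2}E_n(K\chi^*)$ via Lemma \ref{lem:21} and \eqref{equ:Pgc}. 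The only cosmetic difference is that the paper cites the intermediate bound $\|\mathcal{S}_nF\|_2\leq\|K\mathcal{L}_nf\|_2$ from the proof of Theorem \ref{thm2} rather than rederiving it from Lemma \ref{lem:22}, which is what you do.
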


\begin{proof}
In the proof of Theorem \ref{thm2}, we have obtained $\|\mathcal{S}_nF\|_2\leq \|K\mathcal{L}_nf\|_{2}$. Thus, for $K\in\mathcal{C}(\Omega)$, by generalized H\"{o}lder's inequality, we have $\|\mathcal{S}_nF\|_2\leq \|K\|_{\infty}\|\mathcal{L}_nf\|_{2}$, and by the stability result \eqref{equ:recentstability} of $\mathcal{L}_n$, we have the stability \eqref{equ:contstability} of $S_n$.

Similar to the case of $K\in L^2(\Omega)$, for any $\chi\in\mathbb{P}_{n'}$, we have
\begin{equation*}\begin{split}
\|\mathcal{S}_nF-F\|_2
&\leq \|\mathcal{S}_n(F-K\chi)\|_2+\|K(f-\chi)\|_2+\|\mathcal{S}_n(K\chi)-K\chi\|_2 \\
&\leq \frac{V^{1/2}}{\sqrt{1-\eta}}\|K\|_{\infty}\|f-\chi\|_{\infty}+\|K\|_2\|f-\chi\|_{\infty}+\|\mathcal{S}_n(K\chi)-K\chi\|_2.
\end{split}\end{equation*}
Letting $\chi=\chi^*$ leads to
\begin{equation}\label{equ:contitermediate}
\|\mathcal{S}_nF-F\|_2=\left(\frac{V^{1/2}}{\sqrt{1-\eta}}\|K\|_{\infty}+\|K\|_2  \right)E_{n'}(f)+\|\mathcal{S}_n(K\chi^*)-K\chi^*\|_2.
\end{equation}

By Lemma \ref{lem:21} and the estimate \eqref{equ:Pgc}, for any $\chi\in\mathbb{P}_n$, we have
\begin{equation*}\begin{split}
\|\mathcal{S}_n(K\chi^*)-K\chi^*\|_2
&=\|\mathcal{P}_n(K\chi^*)-K\chi^*\|_2\leq \|\mathcal{P}_n(K\chi^*-\chi)\|_2 + \|K\chi^*-\chi\|_2\\
&\leq V^{1/2}\|K\chi^*-\chi\|_{\infty}+ V^{1/2}\|K\chi^*-\chi\|_{\infty}=2V^{1/2}\|K\chi^*-\chi\|_{\infty}.
\end{split}\end{equation*}
Thus $\|\mathcal{S}_n(K\chi^*)-K\chi^*\|_2\leq 2V^{1/2}E_n(K\chi^*)$. Together with \eqref{equ:contitermediate}, we have the estimate \eqref{equ:conterror}.
\end{proof}

\subsection{The potential inefficiency of classical hyperinterpolation for the approximation of $F=Kf$}\label{sec:wrong}
The classical hyperinterpolation \eqref{equ:hyperinterpolation} is defined to approximate continuous functions. The approximation of $F=Kf\in\mathcal{C}(\Omega)$ by efficient hyperinterpolation is described by Theorem \ref{thm3}. Thus, if we let $K=1$, then both the stability result \eqref{equ:contstability} and the error bound \eqref{equ:conterror} of efficient hyperinterpolation reduce to \eqref{equ:recentstability} and \eqref{equ:recenterror} of the classical hyperinterpolation, respectively, derived in \cite{an2022quadrature}. Furthermore, if the quadrature rule \eqref{equ:quad} has exactness degree $2n$, that is, $\eta=0$, then they reduce to the original results \eqref{equ:originalstability} and \eqref{equ:originalerror} derived by Sloan in \cite{sloan1995polynomial}.

% \vspace{0.3cm}\noindent
% \begin{tikzpicture}
%     \node [block] (efficient) {$S_n$ in this paper: \\ stability \eqref{equ:contstability} \\ error bound \eqref{equ:conterror} };
%     \node [block,right=2cm of efficient] (AnWu)  {$L_n$ with q.e. $n+n'$ \cite{an2022quadrature}: \\ stability \eqref{equ:recentstability} \\ error bound \eqref{equ:recenterror}};
%     \node [block,right=2cm of AnWu] (Sloan)  {$L_n$ with q.e. $2n$ \cite{sloan1995polynomial}: \\ stability \eqref{equ:originalstability} \\ error bound \eqref{equ:originalerror}};
%     % \node [block,below right=0.2cm and 1cm of MSF] (Log)  {Logaritmo de la potencia};
%     % \node [block,below left=0.2cm and 1cm of Log] (DCT) {Transformada Coseno Discreta};
%     % \node [block,left=2cm of DCT] (MFCC) {Descriptores MFCC};

%     % Draw edges
%     \draw [arrow] (efficient) -- node [above] {$K=1$} (AnWu);
%     \draw [arrow] (AnWu) -- node [above] {$\eta = 0$} (Sloan);
%     % \draw [arrow] () -- node [above] {espectro} (MSF);

%     % \draw[arrow] (MSF)  -|  (Log) node [above,pos=0.25] {espectro} node [below,pos=0.25] {de Mel}   ;
%     % \draw[arrow] (Log.south) |-  (DCT.east);
%     % \draw [arrow] (DCT) -- (MFCC) ;
% \end{tikzpicture}
% % \vspace{0.1cm}

But what if $K\neq 1$ and $K$ is awkward enough to be approximated? In this case,
\begin{equation*}
\|\mathcal{S}_nF-F\|_2\lesssim E_{n'}(f)+ E_n(K\chi^*),
\end{equation*}
where $\chi^*$ is the best uniform approximation of $f$ in $\mathbb{P}_{n'}$. However, for the classical hyperinterpolation there holds
\begin{equation*}
\|\mathcal{L}_nF-F\|_2\lesssim E_{n'}(Kf).
\end{equation*}
Thus, if $f$ is smooth enough so that $E_n(K\chi^*)$ dominates the bound of $\|\mathcal{S}_nF-F\|_2$, and if $n'<n$ and $K$ is awkward enough so that $E_{n'}(Kf)$ is considerably greater than $E_n(K\chi^*)$, efficient hyperinterpolation shall give a better approximation than the classical one in the sense of estimated error bounds.

On the other hand, it is inappropriate to claim that efficient hyperinterpolation is always better than the classical hyperinterpolation in the approximation of $F=Kf$. If the singularity of $K$ is relatively weak (for a singular $K$), or if $K$ oscillates slowly (for an oscillatory $K$), then the classical hyperinterpolation may generate a comparable or even better approximation of $F$ than efficient hyperinterpolation.

\section{Examples and numerical experiments}\label{sec:examples}
We now investigate efficient hyperinterpolation \eqref{equ:efficienthyper} on two specific regions, the interval $[-1,1]\subset\mathbb{R}$ and the unit sphere $\mathbb{S}^2\subset \mathbb{R}^3$. Numerical experiments are also conducted. On each region, we test oscillatory and singular terms $K$, respectively. A key issue is how to evaluate the modified moments \eqref{equ:betar} analytically or stably. We shall discuss the computational issues of the moments separately on each region and for each $K$. All numerical results are carried out by using MATLAB R2022a on a laptop (16 GB RAM, Intel CoreTM i7-9750H Processor) with macOS Monterey 12.3.

\subsection{On the interval}

Let $\Omega =[-1,1]$. In this case, $d_n=n+1$. There is merit in adopting orthogonal polynomials as the basis \cite{MR2061539,MR0000077} for the approximation of functions defined on $[-1,1]$. In our experiments, we let $\{p_{\ell}\}_{\ell=1}^{d_n}$ be normalized Legendre polynomials $\{\tilde{P}_{\ell}\}_{\ell=0}^{n}$, and let $\{q_{r}\}_{r=1}^{d_{2n}}$ be Chebyshev polynomials $\{T_r\}_{r=0}^{2n}$. Thus for any $\tilde{P}_{\ell'}\tilde{P}_{\ell}$ with $0\leq\ell',\ell\leq n$, it can be expressed as $\tilde{P}_{\ell'}\tilde{P}_{\ell} = \sum_{r=0}^{2n}c_rT_{r}$, where the coefficients are given for $r\geq1$ by
\begin{equation*}
c_r = \frac{2}{\pi}\int_{-1}^1\frac{\tilde{P}_{\ell'}(x)\tilde{P}_{\ell}(x)T_r(x)}{\sqrt{1-x^2}}\text{d}x,\quad r = 1,\ldots,2n,
\end{equation*}
and for $r=0$ by the same formula with the factor $\pi/2$ changed to $1/\pi$ for $r=0$ \cite{MR3012510}. In the expression of $c_r$, $(1-x^2)^{-0.5}$ is the weight function associated to the Chebyshev polynomials, and $\langle \tilde{P}_{\ell'}\tilde{P}_{\ell},T_r\rangle$ is divided by the factor $\langle T_r,T_r\rangle$ since $\{T_r\}_{r=0}^{2n}$ are not orthonormal. In our experiments, these coefficients $\{c_r\}$ are obtained by the \texttt{chebcoeffs} command included in Chebfun \cite{driscoll2014chebfun}. For the quadrature rule \eqref{equ:quad}, we use the Gauss--Legendre quadrature. It is well-known that the $m$-point Gauss--Legendre quadrature has exactness degree $2m-1$.

\textbf{Oscillatory functions.} We first test $K(x) = e^{i\kappa x}$ with $\kappa>0$, which is an oscillatory term regularly appearing in applications.
For the evaluation of
\begin{equation}\label{equ:1dhighmoments}
\beta_r=\int_{-1}^1e^{i\kappa x} T_{r}(x)\text{d}x,\quad r  = 0,1,\ldots,2n,
\end{equation}
we invoke the stable algorithm proposed in \cite{MR2846755} for implementing the Filon--Clenshaw--Curtis rule \cite{dominguez2013filon,MR2846755}. For the function $f\in\mathcal{C}[-1,1]$, we let $f(x) = (1.2-x^2)^{-1}$.

For $\kappa=100$, we let $n=120$ and $m=70$; that is, the theoretical error of classical hyperinterpolation is controlled by $E_{19}(e^{i100 x}f)$, while that of efficient hyperinterpolation is controlled by $E_{19}(f)$ and $E_{120}(e^{i100 x}\chi^*)$, where $\chi^*\in\mathbb{P}_{19}$ is the best uniform approximation of $f$ in $\mathbb{P}_{19}$. The approximation results are displayed in Figure \ref{fig:1d1}, in which we see that efficient hyperinterpolation generates a good approximation, but the classical one fails to do so. Moreover, for $\kappa=160$, we let $n=180$ and $m=100$; that is, the theoretical error of classical hyperinterpolation is controlled by $E_{19}(e^{i160 x}f)$, while that of efficient hyperinterpolation is controlled by $E_{19}(f)$ and $E_{180}(e^{i160 x}\chi^*)$, where $\chi^*\in\mathbb{P}_{19}$ is the best uniform approximation of $f$ in $\mathbb{P}_{19}$. The approximation results are displayed in Figure \ref{fig:1d2}, which convey the same message as the case of $\kappa=100$.

\begin{figure}[htbp]
  \centering
  % include first image
  \includegraphics[width=\textwidth]{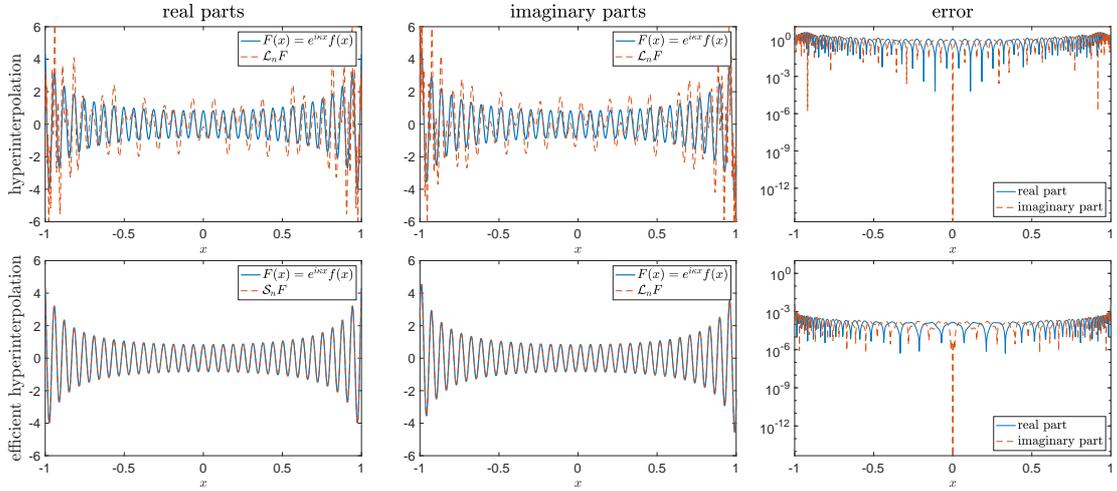}
  \caption{Approximation of $F(x)=e^{i\kappa x}(1.2-x^2)^{-1}$ by $\mathcal{L}_n$ and $\mathcal{S}_n$ with $(\kappa,n,m)=(100,120,70)$.}\label{fig:1d1}
\end{figure}

\begin{figure}[htbp]
  \centering
  % include first image
  \includegraphics[width=\textwidth]{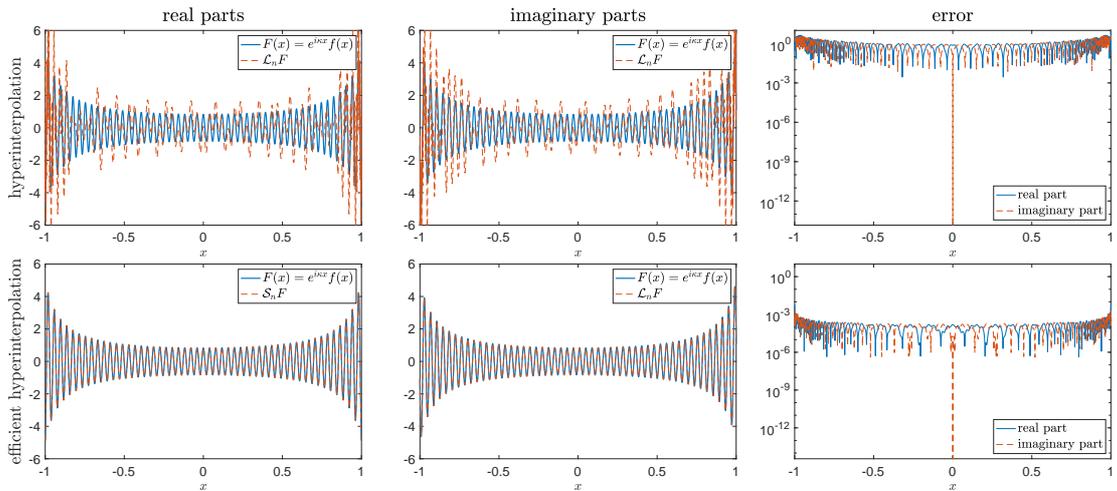}
  \caption{Approximation of $F(x)=e^{i\kappa x}(1.2-x^2)^{-1}$ by $\mathcal{L}_n$ and $\mathcal{S}_n$ with $(\kappa,n,m)=(160,180,100)$.}\label{fig:1d2}
\end{figure}

We continue with a more detailed investigation on the approximation of $F(x) =e^{i\kappa x}(1.2-x^2)^{-1} $ with $\kappa=100$ and 160. For $\kappa=100$, we test $n=100$, 120, and 150; for $\kappa=160$, we consider $n=160$, 180, and 210. For each $(\kappa,n)$, we test several numbers $m$ of quadrature points. The $L^2$ errors of each hyperinterpolant are listed in Table \ref{tab:HOF}. In each setting, the error of efficient hyperinterpolation is always less than that of classical hyperinterpolation. Apart from this, Table \ref{tab:HOF} conveys some other interesting messages. Let $n$ be fixed. When the exactness degree of the quadrature rule is less than $2n$, i.e., $2m-1<2n$, the limited number of quadrature points slow the convergence of classical hyperinterpolation, as its error bound is controlled by $E_{n'}(Kf)=E_{2m-1-n}(Kf)$. Meanwhile, efficient hyperinterpolation may work well because its error bound is controlled by $E_{n'}(f)=E_{2m-1-n}(f)$ and $E_n(Kf)$. When $2m-1\geq 2n$, by our analysis, the accuracy of both schemes only depends on $n$. On the other hand, let $m$ be fixed. When $2m-1<2n$, increasing $n$ may not help in improving the accuracy of classical hyperinterpolation; on the contrary, it may slow its convergence, as $E_{2m-1-n}(Kf)$ is enlarged as $n$ increases. However, if $E_n(Kf)$ dominates the error bound of efficient hyperinterpolation, then increasing $n$ shall improve the accuracy of efficient hyperinterpolation.

\begin{table}[htbp]
  \centering
  \footnotesize
  \setlength{\abovecaptionskip}{0pt}
\setlength{\belowcaptionskip}{10pt}
  \caption{Performance of hyperinterpolation and efficient hyperinterpolation with different $(n,m)$ for the approximation of $F(x)=K(x)f(x)$ with $K(x) = e^{i\kappa x}$ and $f(x) = (1.2-x^2)^{-1}$, with $\kappa=100$ and 160. Errors, measured by $\|\cdot\|$, are in the $L^2$ sense.}\label{tab:HOF}
  \ttfamily
    \begin{tabular}{|c||c|c|c|c|c|c|}
   %\toprule
   % \cline{1-7}
   \hline
    ~ & \multicolumn{2}{c|}{$n=100$} & \multicolumn{2}{c|}{$n=120$} & \multicolumn{2}{c|}{$n=150$}\\
    ~ & \multicolumn{2}{c|}{$K(x)=e^{i100x}$} & \multicolumn{2}{c|}{$K(x)=e^{i100x}$} & \multicolumn{2}{c|}{$K(x)=e^{i100x}$}\\ \hline
          $m$ & $\|\mathcal{L}_nF-F\|$ & $\|\mathcal{S}_nF-F\|$ & $\|\mathcal{L}_nF-F\|$ & $\|\mathcal{S}_nF-F\|$ & $\|\mathcal{L}_nF-F\|$ & $\|\mathcal{S}_nF-F\|$ \\ \hline
60  & 2.1437 & 0.2064 & 2.3310     & 2.1556     & 2.7565     & 2.6291 \\\hline
70  & 1.7667 & 0.2064 & 2.1339     & 3.7060e-04 & 2.3565     & 2.3635 \\\hline
80  & 1.3929 & 0.2064 & 1.7547     & 8.2733e-06 & 2.2603     & 0.02830 \\\hline
100 & 0.3428 & 0.2064 & 1.0354     & 8.2730e-06 & 1.5477     & 8.3481e-10 \\\hline
120 & 0.2064 & 0.2064 & 1.8091e-05 & 8.2730e-06 & 0.7998     & 1.4644e-13 \\\hline
150 & 0.2064 & 0.2064 & 8.2730e-06 & 8.2730e-06 & 9.6996e-14 & 9.2094e-14 \\\hline
180 & 0.2064 & 0.2064 & 8.2730e-06 & 8.2730e-06 & 7.6783e-14 & 6.9940e-14 \\\hline
  \end{tabular}
  \begin{tabular}{|c||c|c|c|c|c|c|}
   %\toprule
   % \cline{1-7}
   \hline
    ~ & \multicolumn{2}{c|}{$n=160$} & \multicolumn{2}{c|}{$n=180$} & \multicolumn{2}{c|}{$n=210$}\\
    ~ & \multicolumn{2}{c|}{$K(x)=e^{i160x}$} & \multicolumn{2}{c|}{$K(x)=e^{i160x}$} & \multicolumn{2}{c|}{$K(x)=e^{i160x}$}\\ \hline
       $m$ & $\|\mathcal{L}_nF-F\|$ & $\|\mathcal{S}_nF-F\|$ & $\|\mathcal{L}_nF-F\|$ & $\|\mathcal{S}_nF-F\|$ & $\|\mathcal{L}_nF-F\|$ & $\|\mathcal{S}_nF-F\|$ \\ \hline
70  & 2.6149 & 2.3755 & 2.8822     & 2.5556     & 3.2681     & 2.8106 \\\hline
100 & 2.0502 & 0.2014 & 2.2357     & 3.7455e-04 & 2.3368     & 2.3372 \\\hline
120 & 1.5749 & 0.2014 & 1.7994     & 5.8491e-05 & 2.1408     & 4.8505e-06 \\\hline
150 & 0.8957 & 0.2014 & 1.1128     & 5.8491e-05 & 1.4421     & 1.6188e-13 \\\hline
180 & 0.2014 & 0.2014 & 1.1543e-04 & 5.8491e-05 & 0.7253     & 1.4140e-13 \\\hline
210 & 0.2014 & 0.2014 & 5.8491e-05 & 5.8491e-05 & 2.9417e-13 & 2.0787e-13 \\\hline
240 & 0.2014 & 0.2014 & 5.8491e-05 & 5.8491e-05 & 1.2553e-13 & 1.2040e-13 \\\hline
  \end{tabular}
\end{table}

\textbf{Singular functions.} We then test three singular terms $K$, which are
\begin{equation*}
K(x)=
\begin{cases}
(1+x)^{-1/3},& \\
|x-1|^{-0.2},&\\
(1-x^2)^{-0.5}.&
\end{cases}
\end{equation*}
For the first two cases, we compute
\begin{equation*}
\beta_r=\int_{-1}^1K(x) T_{r}(x)\text{d}x,\quad r  = 0,1,\ldots,2n,
\end{equation*}
by the built-in command \texttt{quadgk} in MATLAB, which is a stable procedure developed in \cite{MR2379343}. For the third case, as $(1-x^2)^{-0.5}$ is the weight function associated to the Chebyshev polynomials, we have $\beta_0=\pi$ and $\beta_{r}=0$ for all $r\geq1$. For the continuous function $f\in\mathcal{C}[-1,1]$, we let $f(x) = e^{-x^2}$.

For each $K$, we report the $L^1$ errors of classical and efficient hyperinterpolation with $n=6,9,12,\ldots,120$, and $m=\lceil 1.1n/2\rceil$, $\lceil 1.2n/2\rceil$, and $\lceil 1.5n/2\rceil$. These errors are plotted in Figure \ref{fig:1dsingular}. We can summarize from these errors that when the available data (the number of quadrature points) is limited, then the error of efficient hyperinterpolation is generally less than that of classical hyperinterpolation. It is also interesting to see that classical hyperinterpolation may perform better than efficient hyperinterpolation as the amount of quadrature points increases. For example, see the subplots on the bottom left and bottom right of Figure \ref{fig:1dsingular}. An interesting related fact is that the functions $K(x)=(1+x)^{-1/3}$ and $K(x)=(1-x^2)^{-0.5}$ is smoother than $K(x)=|x-1|^{-0.2}$ in the sense of differentiability. Hence, it is interesting to identify the critical number of quadrature points that the outperformance of the classical and efficient hyperinterpolation switches as future work. In particular, this critical number may be related to the smoothness of $F$.

\begin{figure}[htbp]
  \centering
  % include first image
  \includegraphics[width=\textwidth]{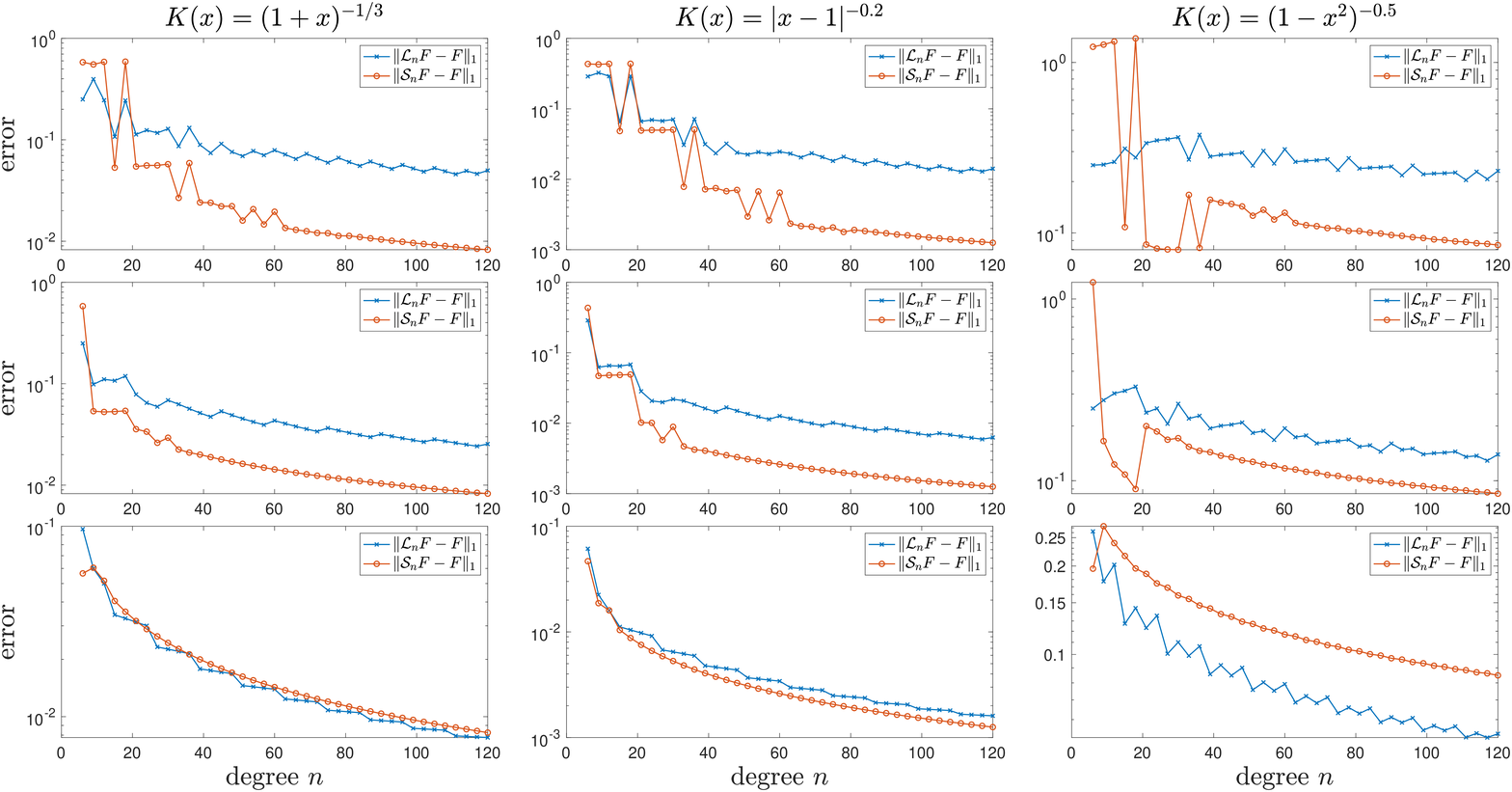}
  \caption{Performance of hyperinterpolation and efficient hyperinterpolation with different $(n,m)$ for the approximation of $F(x)=K(x)f(x)$ with three singular $K$'s and $f(x)=e^{-x^2}$. From top row to bottom row: $m=\lceil 1.1n/2\rceil$, $\lceil 1.2n/2\rceil$, and $\lceil 1.5n/2\rceil$, respectively.}\label{fig:1dsingular}
\end{figure}

\subsection{On the Sphere}

Let $\Omega = \mathbb{S}^2\subset\mathbb{R}^3$ with $\text{d}\omega(x)  = \omega (x)\text{d}x$, where $\omega(x)$ is an area measure on $\mathbb{S}^2$. Thus $V=\int_{\mathbb{S}^2}\text{d}\omega  = 4\pi$ denotes the surface area of $\mathbb{S}^2$. In this example, $\mathbb{P}_n$ can be regarded as the space of spherical polynomials of degree at most $n$. Let the basis $\{p_{\ell}\}_{\ell=1}^{d_n}$ be a set of orthonormal real spherical harmonics $ \{ Y_{\ell,k}: \ell  = 0, 1,\ldots, n,\text{ and }k = -\ell,-\ell+1,\ldots,\ell -1,\ell\}$, and the dimension of $\mathbb{P}_n$ is $d_n=\dim\mathbb{P}_n=(n+1)^2$. Let $\{q_r\}_{r=1}^{d_{2n}}$ also be the set of orthonormal real spherical harmonics $ \{ Y_{\ell,k}: \ell  = 0, 1,\ldots, 2n,\text{ and }k = -\ell,-\ell+1,\ldots,\ell -1,\ell\}$.

For the quadrature rule \eqref{equ:quad}, we use the rule based on spherical $t$-designs, which can be implemented easily and efficiently. A point set $\{x_1,x_2,\ldots,x_m\}\subset\mathbb{S}^2$ is said to be a \emph{spherical $t$-design} \cite{delsarte1991geometriae} if it satisfies
\begin{equation}\label{equ:sphericalquadrature}
\frac{1}{m}\sum_{j=1}^mv(x_j)=\frac{1}{4\pi}\int_{\mathbb{S}^2}v\text{d}\omega\quad\forall v\in\mathbb{P}_t.
\end{equation}
In other words, it is a set of points on the sphere such that an equal-weight quadrature rule in these points integrates all (spherical) polynomials up to degree $t$ exactly. Spherical $t$-designs require at least $(t+1)^2$ quadrature points to achieve the exactness degree $t$. For generating spherical $t$-designs, we make use of the well-conditioned spherical $t$-designs \cite{MR2763659} with $m = (t+1)^2$.

For any $Y_{\ell',k'}Y_{\ell,k}$ with $0\leq\ell',\ell\leq n$, $-\ell'\leq k'\leq \ell'$, and $-\ell\leq k\leq \ell$, it can be expressed as
\begin{equation*}
Y_{\ell',k'}Y_{\ell,k} = \sum_{\ell''=0}^{2n}\sum_{k''=-\ell''}^{\ell''}c_{\ell''k''}Y_{\ell'',k''},
\end{equation*}
where the coefficients
\begin{equation*}
c_{\ell''k''} =\int_{\mathbb{S}^2}(Y_{\ell',k'}Y_{\ell,k})Y_{\ell'',k''}\text{d}\omega,\quad \ell'' = 0,\ldots,2n,\quad k''=-\ell'',\ldots,\ell''
\end{equation*}
are evaluated by a quadrature rule using spherical $(\ell+\ell'+\ell'')$-designs.

%\footnote{The integral of three \emph{complex-valued} spherical harmonics can be evaluated in terms of Clebsch–Gordan coefficients, the Wigner 3-$jm$ symbol, the Racah coefficients, and many other techniques. However, this type of methods only pertains to complex-valued spherical harmonics. Thus, we evaluate the integrals of three real spherical harmonics by a quadrature rule using spherical $t$-designs with $t$ exceeding the sum of degrees of all three spherical harmonics.}

We may use boldface letters to denote a point on $\mathbb{S}^2$, say ${\bm{x}}=[x,y,z]^{\text{T}}$, in order to avoid any potential ambiguity. The Euclidean distance between two points $\bm{\xi}$ and $\bm{x}$ on the sphere $\mathbb{S}^2$ is defined as $|\bm{\xi}-\bm{x}|:=\sqrt{2(1-\bm{\xi}\cdot \bm{x})}$, where ``$\cdot$'' denotes the inner product in $\mathbb{R}^3$.

\textbf{Oscillatory functions.} The spherical harmonics themselves are highly oscillatory when their degrees become relatively large. Thus we let $K=Y_{\bar{\ell},\bar{k}}$ for some $\bar{\ell},\bar{k}\in\mathbb{N}$. In this case, the modified moments can be evaluated by
\begin{equation*}
\beta_{r}:=\beta_{\ell''k''}=\int_{\mathbb{S}^2}Y_{\bar{\ell},\bar{k}}Y_{\ell'',k''}\text{d}\omega = \delta_{\bar{\ell},\ell''}\delta_{\bar{k},k''}.
\end{equation*}
For the continuous function $f\in\mathcal{C}(\mathbb{S}^2)$, we let $f(\bm{x})=f(x,y,z) =\cos(\cosh(xz) - 2y)$.

We investigate two kinds of oscillatory terms, $(\bar{\ell},\bar{k})=(12,8)$ and $(32,-24)$. For $K=Y_{12,8}$, we let $n=20$ and $m=625$, that is, $t=24$, the theoretical error of classical hyperinterpolation is controlled by $E_{4}(Y_{12,8}f)$, while that of efficient hyperinterpolation is controlled by $E_{4}(f)$ and $E_{20}(Y_{12,8}\chi^*)$, where $\chi^*\in\mathbb{P}_{4}$ is the best uniform approximation of $f$ in $\mathbb{P}_{4}$. The approximation results are displayed in the upper row of Figure \ref{fig:sphere}, in which we see that efficient hyperinterpolation generates a good approximation, but the classical one does not. For $K=Y_{32,-24}$, we let $n=40$ and $m=2209$, that is, $t=46$, the theoretical error of classical hyperinterpolation is controlled by $E_{6}(Y_{32,-24}f)$, while that of efficient hyperinterpolation is controlled by $E_{6}(f)$ and $E_{40}(Y_{32,-24}\chi^*)$, where $\chi^*\in\mathbb{P}_{6}$ is the best uniform approximation of $f$ in $\mathbb{P}_{6}$. The approximation results are displayed in the lower row of Figure \ref{fig:sphere}, which convey the same message as the case of $K=Y_{12,8}$.

\begin{figure}[htbp]
  \centering
  % Requires \usepackage{graphicx}
  % \includegraphics[width=\textwidth]{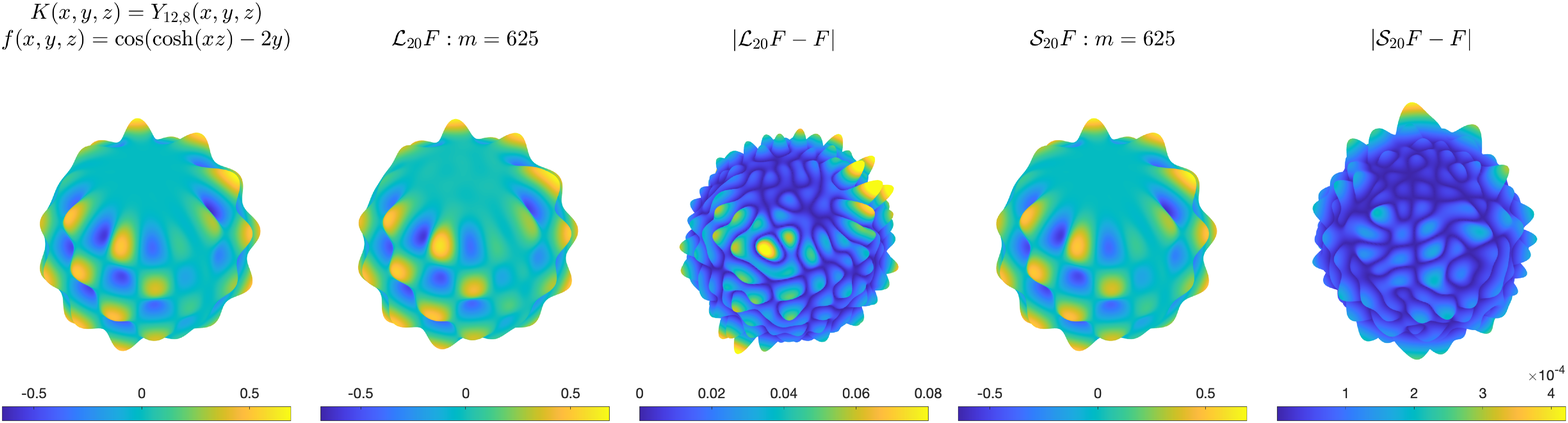}\\
  \begin{subfigure}
  \centering
  % include first image
  \includegraphics[width=\textwidth]{sphericalHOF.eps}
\end{subfigure}\\
\begin{subfigure}
  \centering
  % include first image
  \includegraphics[width=\textwidth]{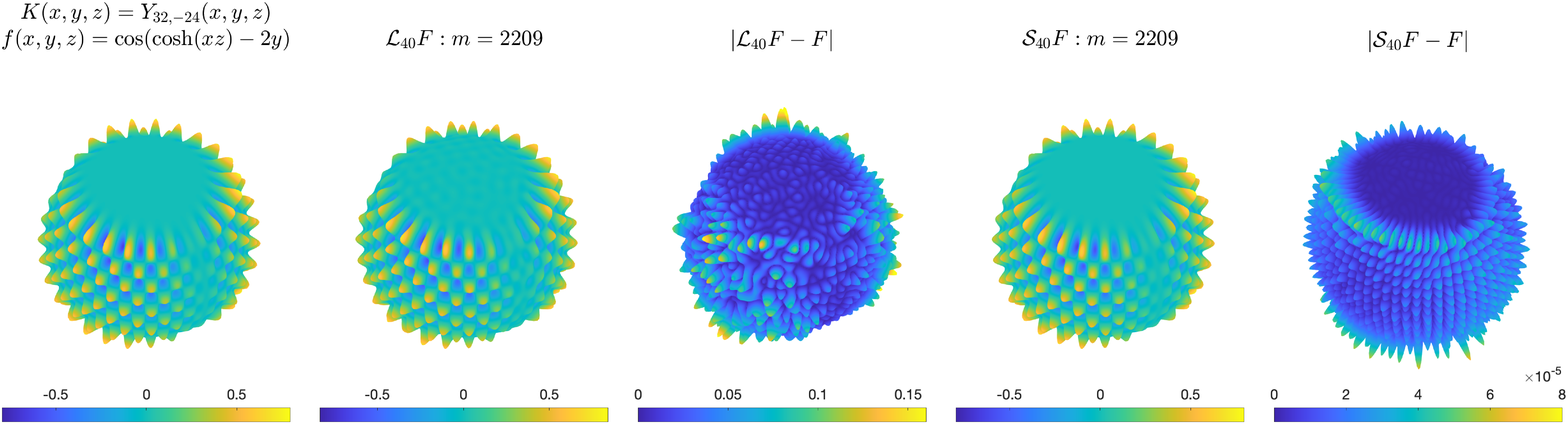}
\end{subfigure}
  \caption{Approximation of $F=Y_{12,8}f$ and $F=Y_{12,8}f$ with $f(x,y,z)=\cos(\cosh(xz) - 2y)$ by hyperinterpolation $\mathcal{L}_n$ and efficient hyperinterpolation $\mathcal{S}_n$.}\label{fig:sphere}
\end{figure}

Similar to Table \ref{tab:HOF}, we list the $L^2$ errors of the classical and efficient hyperinterpolation in different settings in Table \ref{tab:sphericalHOF}. We see that the error of efficient hyperinterpolation is always less than (or eventually equal to) that of the classical hyperinterpolation.

\begin{table}[htbp]
  \centering
    \footnotesize
\setlength{\abovecaptionskip}{0pt}
\setlength{\belowcaptionskip}{10pt}
  \caption{Performance of hyperinterpolation and efficient hyperinterpolation with different $(n,m)$ for the approximation of $F(x,y,z)=K(x,y,z)f(x,y,z)$ with two $K$'s and $f(x,y,z)=\cos(\cosh(xz) - 2y)$. Errors, measured by $\|\cdot\|$, are in the $L^2$ sense.}\label{tab:sphericalHOF}
  \ttfamily
  \begin{tabular}{|c||c|c|c|c|c|c|}
   %\toprule
   % \cline{1-7}
   \hline
    ~ & \multicolumn{2}{c|}{$n=16$} & \multicolumn{2}{c|}{$n=18$} & \multicolumn{2}{c|}{$n=20$}\\
    ~ & \multicolumn{2}{c|}{$K(\bm{x}) = Y_{12,8}(\bm{x})$} & \multicolumn{2}{c|}{$K(\bm{x}) = Y_{12,8}(\bm{x})$} & \multicolumn{2}{c|}{$K(\bm{x}) = Y_{12,8}(\bm{x})$}\\ \hline
          $m$ & $\|\mathcal{L}_nF-F\|$ & $\|\mathcal{S}_nF-F\|$ & $\|\mathcal{L}_nF-F\|$ & $\|\mathcal{S}_nF-F\|$ & $\|\mathcal{L}_nF-F\|$ & $\|\mathcal{S}_nF-F\|$ \\ \hline
484  & 0.1427 & 0.0116 & 0.1359     & 0.0092     & 0.1233 & 0.0082 \\\hline
529  & 0.1271 & 0.0097 & 0.1160     & 0.0031     & 0.1181 & 0.0044 \\\hline
576  & 0.1090 & 0.0086 & 0.0993     & 9.1533e-04 & 0.0932 & 8.0575e-04 \\\hline
625  & 0.0910 & 0.0086 & 0.0973     & 7.0753e-04 & 0.0861 & 2.9376e-04 \\\hline
841  & 0.0530 & 0.0086 & 0.0425     & 5.9738e-04 & 0.0439 & 5.9812e-05 \\\hline
1089 & 0.0285 & 0.0086 & 0.0189     & 5.9737e-04 & 0.0112 & 5.9767e-05 \\\hline
1369 & 0.0098 & 0.0086 & 6.4698e-04 & 5.9737e-04 & 1.6743e-04 & 5.9767e-05 \\\hline
1681 & 0.0086 & 0.0086 & 5.9749e-04 & 5.9737e-04 & 5.9776e-05 & 5.9767e-05 \\\hline
2025 & 0.0086 & 0.0086 & 5.9737e-04 & 5.9737e-04 & 5.9767e-05 & 5.9767e-05 \\\hline
  \end{tabular}
     \begin{tabular}{|c||c|c|c|c|c|c|}
   %\toprule
   % \cline{1-7}
   \hline
    ~ & \multicolumn{2}{c|}{$n=36$} & \multicolumn{2}{c|}{$n=38$} & \multicolumn{2}{c|}{$n=40$}\\
    ~ & \multicolumn{2}{c|}{$K(\bm{x}) = Y_{32,-24}(\bm{x})$} & \multicolumn{2}{c|}{$K(\bm{x}) = Y_{32,-24}(\bm{x})$} & \multicolumn{2}{c|}{$K(\bm{x}) = Y_{32,-24}(\bm{x})$}\\ \hline
          $m$ & $\|\mathcal{L}_nF-F\|$ & $\|\mathcal{S}_nF-F\|$ & $\|\mathcal{L}_nF-F\|$ & $\|\mathcal{S}_nF-F\|$ & $\|\mathcal{L}_nF-F\|$ & $\|\mathcal{S}_nF-F\|$ \\ \hline
1849 & 0.2092 & 0.0086 & 0.1868 & 0.0031     & 0.1674 & 0.0028 \\\hline
2025 & 0.1622 & 0.0083 & 0.1469 & 6.3438e-04 & 0.1433 & 2.7101e-04 \\\hline
2209 & 0.1327 & 0.0083 & 0.1295 & 5.9311e-04 & 0.1252 & 4.7438e-05 \\\hline
2401 & 0.1180 & 0.0083 & 0.1160 & 5.9286e-04 & 0.1167 & 4.4752e-05 \\\hline
3249 & 0.0736 & 0.0083 & 0.0689 & 5.9286e-04 & 0.0673 & 4.4728e-05 \\\hline
4225 & 0.0432 & 0.0083 & 0.0391 & 5.9286e-04 & 0.0350 & 4.4728e-05 \\\hline
5329 & 0.0174 & 0.0083 & 0.0091 & 5.9286e-04 & 0.0053 & 4.4728e-05 \\\hline
6561 & 0.0083 & 0.0083 & 5.9286e-04 & 5.9286e-04 & 4.4731e-05 & 4.4728e-05 \\\hline
7921 & 0.0083 & 0.0083 & 5.9282e-04 & 5.9286e-04 & 4.4728e-05 & 4.4728e-05 \\\hline
  \end{tabular}
\end{table}

\textbf{Singular functions.} For singular functions, we test three different singular terms. Their forms and the evaluation of modified moments
\begin{equation*}
\beta_{r}:=\beta_{\ell''k''}=\int_{\mathbb{S}^2}K({\bm{x}}) Y_{\ell'',k''}({\bm{x}})\text{d}{\omega(\bm{x})},\quad \ell'' = 0,\ldots,2n,\quad k'' = -\ell'',\ldots,\ell''
\end{equation*}
are elaborated as follows.
\begin{itemize}
       \item Let $K(\bm{x})=|\bm{\xi}-\bm{x}|^{\nu}$, where $\nu>-1$, and $\bm{\xi}$ is an algebraic type singularity if $\nu<0$. Then
              \begin{equation*}
              \beta_{\ell''k''}=2^{\nu+2}\pi\left(-\frac{\nu}{2}\right)_{\ell''}\frac{\Gamma(\frac{\nu+2}{2})}{\Gamma(\ell''+\nu/2+2)}Y_{\ell'',k''}(\bm{\xi}),
              \end{equation*}
              where $\Gamma(\cdot)$ is the Gamma function, and $(\cdot)_n = \Gamma(\cdot+n)/\Gamma(\cdot)$ is the Pochhammer symbol \cite{MR2934227}.
       \item Let $K(\bm{x})=\log{|\bm{\xi}-\bm{x}|}$, where $\bm{\xi}$ is a logarithmic type singularity. Then
              \begin{equation*}
              \beta_{\ell''k''}= \frac{|\mathbb{S}^1|}{2}\left(\int_{-1}^1\log(1-t)P_{\ell''}(t)\text{d}t\right)Y_{\ell'',k''}(\bm{\xi}),
              \end{equation*}
              where $|\mathbb{S}^1|=2\pi$ is the length of the unit circle $\mathbb{S}^1$, and $P_{\ell}$ denote the Legendre polynomials of degree $\ell$ (without normalization).
       \item Let $K(\bm{x})=|\bm{\xi}-\bm{x}|^{\nu_1}|\bm{\xi}+\bm{x}|^{\nu_2}$, where $\nu_1,\nu_2>-1$, and $\bm{\xi}$ and $-\bm{\xi}$ are two algebraic type singularities if $\nu_1,\nu_2<0$. Then
              \begin{equation*}\begin{split}
              \beta_{\ell''k''}= &(-1)^{\ell''}2^{(\nu_1+\nu_2)/2}|\mathbb{S}^1|R_{\ell,3}\\
              &\left(\int_{-1}^1(1-t)^{\nu_1/2}(1+t)^{\nu_2/2}\left[\left(\frac{\text{d}}{\text{d}t}\right)^{\ell''}(1- t^2)^{\ell''}\right]\text{d}t \right)Y_{\ell'',k''}(\bm{\xi}),
              \end{split}\end{equation*}
              where
              \begin{equation*}
              R_{n,s}=\frac{\Gamma(\frac{s-1}{2})}{2^n\Gamma(n+\frac{s-1}{2})}.
              \end{equation*}
\end{itemize}
There results can be found in \cite[Chapter 3]{MR2934227}. In particular, the modified moments of the third term can be evaluated by
\begin{equation*}
\beta_{\ell''k''}= 2^{(\nu_1+\nu_2)/2}|\mathbb{S}^1|\left(\int_{-1}^1(1-t)^{\nu_1/2}(1+t)^{\nu_2/2}P_{\ell''}(t){\text{d}t} \right)Y_{\ell'',k''}(\bm{\xi}),
\end{equation*}
with the aid the Rodrigues' formula
\begin{equation*}
P_{n}(x)={\frac {1}{2^{n}n!}}\left(\frac{\text{d}}{\text{d}x}\right)^n\left[(x^{2}-1)^{n}\right] = (-1)^n{\frac {1}{2^{n}n!}}\left(\frac{\text{d}}{\text{d}x}\right)^n(1-x^{2})^{n}
\end{equation*}
for Legendre polynomials\footnote{It may be unstable to evaluate the integral $\int_{-1}^1 (1-t)^{\nu_1/2}(1 + t)^{\nu_2/2}(\frac{\text{d}}{\text{d}t})^n(1- t^2)^n\text{d}t$ by taking the $n$-th derivative and then evaluating the resulting integral, as the factor accumulated as $n!$ after differentiation may be huge. Thus the error of representing numbers by double-precision floating-point numbers, according to IEEE Standard 754, may be inaccurate.}. For the continuous function $f\in\mathcal{C}(\mathbb{S}^2)$, we consider $f(\bm{x})=f(x,y,z) =e^{x+y+z}$.

For each $K$, we report the $L^1$ errors of classical and efficient hyperinterpolation with $n=2,3,4,\ldots,40$, and $m=(\lceil 1.1n\rceil+1)^2$, $(\lceil 1.2n\rceil+1)^2$, and $(\lceil 1.5n\rceil+1)^2$. The singularity $\bm{\xi}$ in the definitions of $K(\bm{x})$ is set as $\bm{\xi}=[\sqrt{2}/2,\sqrt{2}/2,0]^{\text{T}}$. These errors are plotted in Figure \ref{fig:sphericalsingular}. Unlike the experiments on the singular functions on $[-1,1]$, in which the singularities are always endpoints, all singularities on the sphere are interior. Thus, the numerical integration of spherical singular functions becomes extremely unstable: the actual performance of numerical integration depends on the point distribution around the singularities. This technical issue is also reflected in the approximation of singular functions by numerically integrating the $L^2$ projection coefficients, i.e., the approximation by classical hyperinterpolation. We see from Figure \ref{fig:sphericalsingular} that it seems impossible to predict the actual accuracy of classical hyperinterpolation in the approximation of $F(x,y,z)=K(x,y,z)e^{x+y+z}$, with three kinds of singular $K$ listed above. Indeed, the stability and error bounds of classical hyperinterpolation in \cite{an2022quadrature,sloan1995polynomial} are only valid for the approximation of continuous functions. On the other hand, we see that the actual accuracy of efficient hyperinterpolation is stable and predictable: the point distribution around singularities does not affect the performance of efficient hyperinterpolation, and the approximation error decays as $n$ increases.

\begin{figure}[htbp]
  \centering
  % include first image
  \includegraphics[width=\textwidth]{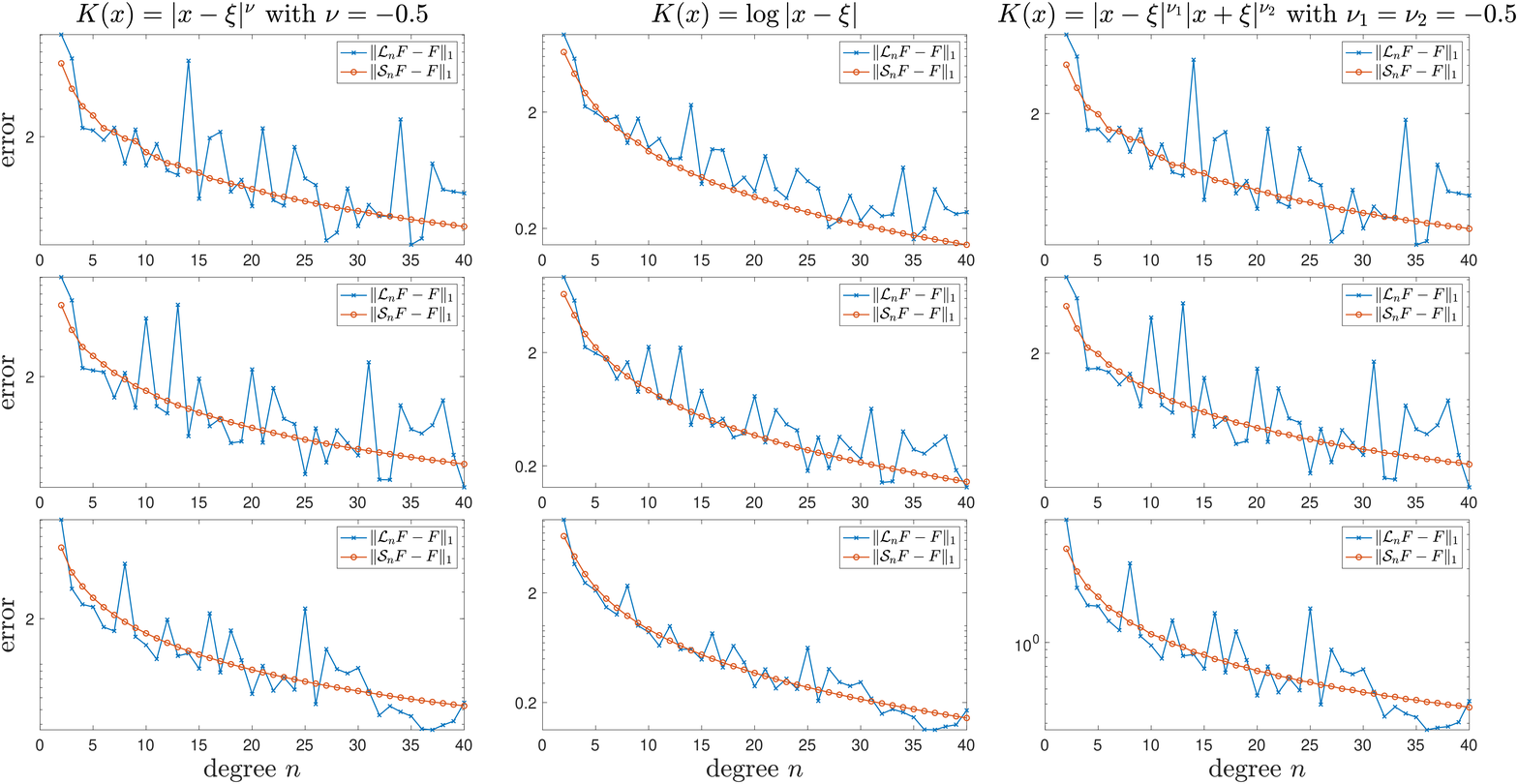}
  \caption{Performance of hyperinterpolation and efficient hyperinterpolation with different $(n,m)$ for the approximation of $F(x,y,z)=K(x,y,z)f(x,y,z)$ with three singular $K$'s and $f(\bm{x})=f(x,y,z)=e^{x+y+z}$. The singularity $\bm{\xi}$ in the definitions of $K(\bm{x})$ is set as $\bm{\xi}=[\sqrt{2}/2,\sqrt{2}/2,0]^{\text{T}}$. From top row to bottom row: $m=(\lceil 1.1n\rceil+1)^2$, $(\lceil 1.2n\rceil+1)^2$, and $(\lceil 1.5n\rceil+1)^2$, respectively.}\label{fig:sphericalsingular}
\end{figure}

\section{Final remarks}
We propose efficient hyperinterpolation to approximate singular and oscillatory functions in the spirit of the product-integration rule. This approximation scheme is new and easy to be implemented. We also obtain error bounds in cases of $K\in L^1(\Omega)$, $ L^2(\Omega)$, and $ L^{\infty}(\Omega)$, respectively. Our theoretical analysis and numerical experiments make it legitimate to apply the proposed scheme to solve problems involving singularity and oscillation functions. On the other hand, efficient hyperinterpolation heavily relies on the accurate or stable evaluation of the modified moments. Thus, much more effort is necessary to understand our scheme's implementation to approximate the function $F=Kf$ with various singular and oscillatory terms $K$.

\section*{Acknowledgement}
We are grateful to Professor Víctor Domínguez of Universidad Pública de Navarra for providing us with the MATLAB subroutine for the stable computation of the modified moments \eqref{equ:1dhighmoments}, which was firstly proposed in \cite{MR2846755}.

\bibliographystyle{siam}
\bibliography{myref}
\clearpage

\end{document}